\newtheorem{theorem}{Theorem}[section]
\newtheorem{proposition}{Proposition}[section]
\newtheorem{example}{Example}[section]
\newtheorem{remark}{Remark}[section]
\newcommand{\R}{\mathbb{R}}
\newcommand{\map}[3]{#1:#2 \rightarrow #3}
\DeclarePairedDelimiterX{\infdivx}[2]{(}{)}{%
  #1\;\delimsize\|\;#2%
}
\newcommand{\infdiv}{\mathbb{D}\infdivx}
\numberwithin{equation}{section}
\title{
Schrodinger Bridge over Averaged Systems
}
\author{Daniel Owusu Adu and Yongxin Chen
}
\begin{document}

\maketitle
\thispagestyle{empty}
\pagestyle{empty}

\begin{abstract}
We consider a Schrödinger bridge problem where the Markov process is subject to parameter perturbations, forming an ensemble of systems. Our objective is to steer this ensemble from the initial distribution to the final distribution using controls robust to the parameter perturbations. Utilizing the path integral formalism, we demonstrate that the optimal control is a non-Markovian strategy, specifically a stochastic feedforward control, which depends on past and present noise. This unexpected deviation from established strategies for Schrödinger bridge problems highlights the intricate interrelationships present in the system's dynamics. From the perspective of optimal transport, a significant by-product of our work is the demonstration that, when the evolution of a distribution is subject to parameter perturbations, it is possible to robustly deform the distribution to a desired final state using stochastic feedforward controls.
\end{abstract}


\section{Introduction} 
\label{sec:introduction}
This paper concerns the problem of conditioning a process at two endpoints. This problem was first studied by Schr\"{o}dinger in~\cite{SE:31}. For precision, we assume that all random variables are defined over an underlying probability space with probability $\mathbb{P}$. We postulate the Schr\"{o}dinger bridge problem in its generality as follows; assume some \emph{fully observed} particles distributed according to an initial probability measure $\mu_0\in\mathcal{P}(\R^d)$ with density $\rho_0$ at time $t=0$, evolve according to a process $\{y(t)\}_{0\leq t\leq t_f}$ in $\R^d$ with transition density $q$.
 Then, according to the law of large numbers, the final density $P(\cdot,t_f)$ is determined by 
 \[
P(x_f,t_f)=\int_{\R^d}q(0,x_0,t_f,x_f)\rho_0(x_0)dx_0.
 \]
 Suppose that at time $t=t_f$ the observed particles are distributed according to $\mu_f\in\mathcal{P}(\R^d)$ with density $\rho_f$, where $$\rho_f(x_f)\neq P(x_f,t_f).$$ Then, $\mu_f$ deviates from the law of large numbers. This means that our assumption of the process $\{y(t)\}_{0\leq t\leq t_f}$ is inaccurate. The following question arises:
\begin{enumerate}
    \item What density $\hat{q}$ satisfies 
\[
\rho_f(x_f)=\int_{\R^d}\hat{q}(0,x_0,t_f,x_f)\rho_0(x_0)dx_0.
\]
\item Among such densities $\hat{q}$, which one is closest to $q$ in some suitable sense.
\end{enumerate}

Statement~1 and~2 constitute the Schr\"{o}dinger bridge problem and the most likely stochastic process denoted as $\{x(t)\}_{0\leq t\leq t_f}$ such that the distributions of $x(0)$ and $x(t_f)$ coincide with $\mu_0$ and $\mu_f$, respectively, is called the Schr\"{o}dinger bridge.  The Schr\"{o}dinger bridge problem reduces to finding the most likely probability law $\mathbf{P}$ of $x(\cdot)$ admitting the end-time marginals $\mu_0$ and $\mu_f$ with densities  $\rho_0$ and $\rho_f$, respectively~\cite{HF:88,YC-TTG-MP:21,CL:13}. More precisely the problem
\begin{equation}\label{eq:original dynamic Schrodinger bridge problem}
\min_{\mathbf{P}}\infdiv{ \mathbf{P}}{\mathbf{R}}:=\int_{\mathcal{X}}\mathrm{log}\left(\frac{d \mathbf{P}}{d \mathbf{R}}\right)d \mathbf{P},
\end{equation}
where $\mathcal{X}:=C([0,t_f];\R^d)$ is the space of all continuous $\R^d$-valued paths on $[0,t_f]$, $\mathbf{R}(\cdot)=\mathbb{P}(y\in(\cdot))$ is the probability law of the process $\{y(t)\}_{0\leq t\leq t_f}$ and the minimization is taken over the space 
of  probability law $\mathbf{P}$ of the process $\{x(t)\}_{0\leq t\leq t_f}$  such that $\mathbf{P}_{0}(\cdot):=\mathbb{P}(x(0)\in(\cdot))=\mu_0(\cdot)$ and $\mathbf{P}_{f}(\cdot):=\mathbb{P}(x(t_f)\in(\cdot))=\mu_f(\cdot)$, where $\mu_0,\mu_f\in\mathcal{P}(\R^d)$ have densities  $\rho_0$ and $\rho_f$, respectively.
Note that~\eqref{eq:original dynamic Schrodinger bridge problem} well-defined if $\frac{d \mathbf{P}}{d \mathbf{R}}$ is the Radon-Nikodym derivative. By disintegrating the probability law  $\mathbf{P}$, the Schr\"{o}dinger problem reduces to the static problem: 
\begin{equation}\label{eq:original Schrodinger bridge problem}
\min_{\mathbf{P}_{0,f}\in\Pi(\mu_0,\mu_f)}\int_{\R^d\times\R^d}\mathrm{log}\left(\frac{d \mathbf{P}_{0,f}}{d q(0,\cdot,t_f,\cdot)}\right)d \mathbf{P}_{0,f},
\end{equation}
where
 \begin{equation*}
 \Pi(\mu_0,\mu_f):=\{\mathbf{P}_{0,f}\in\mathcal{P}(\R^d\times \R^d) : \mathbf{P}_{0,f}\circ\pi^{-1}_0=\mu_0\quad\text{and}\quad \mathbf{P}_{0,f}\circ\pi^{-1}_f=\mu_f\},     
 \end{equation*}
 where $\pi_0(x_0,x_f)=x_0$ and $\pi_f(x_0,x_f)=x_f$, for all $(x_0,x_f)\in\R^d\times\R^d$. Here, we have identified the densities and the measures are the same. Since~\eqref{eq:original Schrodinger bridge problem} only makes sense whenever $\mathbf{P}_{0,f}$ is absolutely continuous with respect to $q$,  for later use, the unique optimizer of~\eqref{eq:original Schrodinger bridge problem} is characterized by 
 \begin{equation}\label{eq: opt_joint_dis}
 \mathbf{P}_{0,f}(A)=\int_A \phi_0(x_0)q(0,x_0,t_f,x_f)\phi_f(x_f)d x_0d x_f,    
 \end{equation}
for all measurable subset $A\subset\R^d\times\R^d$,  where the pair $(\phi_0,\phi_f)$ satisfies
 \begin{align}\label{eq: Schrod_system_cl_c}
\rho_0(x_0)=&\phi_0(x_0)\int_{\R^d}q(0,x_0,t_f,x_f)\phi_f(x_f)d x_f,\cr
\rho_f(x_f)=&\phi_f(x_f)\int_{\R^d}q(0,x_0,t_f,x_f)\phi_0(x_0)d x_0.
\end{align}
 For more details on the Schr\"{o}dinger bridge problem and its equivalency with the optimal transport problem, we refer the reader to~\cite{YC-TTG-MP:21,CL:13} for instance.

In this paper, we study the following stochastic optimal control problem %
\begin{equation}\label{eq:problem 01}
{\bf Problem~1:}\quad\quad\min_{u}\mathbb{E}\left[\int_{0}^{t_f}\frac{1}{2}\|u(t)\|^2dt\right],
\end{equation}
subject to
\begin{align}\label{eq:uncertain states 001}
&d X(t,\theta)=(A(\theta)X(t,\theta)+B(\theta)u(t))d t+\sqrt{\epsilon}B(\theta)d W(t),\nonumber\\
&X(0,\theta)\sim\mu_0\quad\text{ and }\int_{0}^{1} X(t_f,\theta)d\theta\sim\mu_f.
\end{align}
Here $X(t,\theta)\in\R^d$ is the random state of an individual system at time $t$ indexed by the sample point $\theta\in\Omega$,  $\map{A}{\Omega}{\mathbb{R}^{d\times d}}$ and $\map{B}{\Omega}{\mathbb{R}^{d\times m}}$ are measurable mappings such that $\|A\|:=\sup_{\theta\in\Omega}\|A(\theta)\|<\infty$ and $\|B\|:=\sup_{\theta\in\Omega}\|B(\theta)\|<\infty$, where the norm here is the Frobenius norm on the space of matrices, $u\in \mathrm{L}^2([0,t_f];\mathbb{R}^m)$ is a \emph{parameter-independent} control input. 
Our problem is motivated by ensemble-based Reinforcement Learning, see~\cite{ADO-CY:23}. 
 
To relate Problem~$1$ to Schr\"{o}dinger bridge~\eqref{eq:original dynamic Schrodinger bridge problem}-\eqref{eq:original Schrodinger bridge problem}, let 
\begin{align}\label{eq: ens_passive_dyn}
&d Y(t,\theta)=A(\theta)Y(t,\theta)d t+\sqrt{\epsilon}B(\theta)d W(t)\nonumber\\
&Y(0,\theta)=x_0
\end{align}
be the passive dynamics and $\mathbf{R}_{y}$ the probability law induced by the averaged process $y(t)=\int_0^1Y(t,\theta)d\theta$ characterized as
\begin{equation}\label{eq: pass_ave_at_t}
y(t)=\left(\int_0^1e^{A(\theta)t}d\theta\right) x_0+ \sqrt{\epsilon}\int_{0}^{t}\Phi(t,\tau) d W(\tau), 
\end{equation}
where 
\begin{equation}\label{eq: non-transition matrix}
\Phi(t_f,\tau) = \int_0^1e^{A(\theta)(t_f-\tau)}B(\theta)d\theta.    
\end{equation}
Then the end-state transition density is characterized as 
\begin{equation}\label{eq: tran_den_for_ensen_passive_dyn}
q^{\epsilon, G}(0,x_0,t_f,x_f)=(2\pi\epsilon)^{-\frac{d}{2}}(\mathrm{det}(G_{t_f,0}))^{-\frac{d}{2}}\exp\left(-\frac{1}{2\epsilon}\left \|x_f-\left(\int_0^1e^{A(\theta)t_f}d\theta\right)x_0\right \|^2_{G_{t_f,0}^{-1}}\right),   
\end{equation}
where $\|x\|^2_{G_{t_f,0}^{-1}}=x^{\mathrm{T}}G_{t_f,0}^{-1}x$, for all $x\in\R^d$, whenever 
\begin{equation}\label{eq: Gramian}
G_{t_f,0}:=\int_{0}^{t_f}\Phi(t_f,\tau)\Phi(t_f,\tau)^{\mathrm{T}}d\tau	   
\end{equation}
is invertible. We find another probability law $\mathbf{P}_{x}$, where $x(t)=\int_0^1X(t,\theta)d\theta$,  whose averaged drift  and marginal constraint satisfy the averaged of~\eqref{eq:uncertain states 001}. To this end, by adding and subtracting $\int_0^t \Phi(t,\tau)u(\tau)d\tau$ of~\eqref{eq: pass_ave_at_t}, we have that 
\begin{equation}\label{eq: ave_output}
 x(t)=\left(\int_0^1e^{A(\theta)t}d\theta\right) x_0+\int_0^t \Phi(t,\tau)(u(\tau)d\tau +\sqrt{\epsilon} d\tilde{W}(\tau)),
\end{equation}
where 
\begin{equation}\label{eq: new_BM}
\tilde{W}(t)=W(t)-\int_0^t\frac{1}{\sqrt{\epsilon}}u(s)ds.   
\end{equation}
Therefore, by Girsanov's Theorem (see~\cite[Chapter~8.6]{OB:03}), we have that the Radon-Nikodym derivative
\begin{equation}\label{eq: Nikodym derivative}
\frac{d \mathbf{P}_{x}}{d \mathbf{R}_{y}}=\exp{\bigg\{\int_0^{t_f}\frac{1}{\sqrt{\epsilon}}u(t)dW(t)+\int_{0}^{t_f}\frac{1}{2\epsilon}\|u(t)\|^2dt\bigg\}},    
\end{equation}
makes~\eqref{eq: new_BM} a Brownian motion. By substituting~\eqref{eq: Nikodym derivative} in~\eqref{eq:original dynamic Schrodinger bridge problem}, we get that
\begin{equation}\label{eq: KL is min_energy}
\infdiv{ \mathbf{P}_{x}}{\mathbf{R}_{y}}=\mathbb{E}\left[\int_{0}^{t_f}\frac{1}{2\epsilon}\|u(t)\|^2dt\right].    
\end{equation}
Hence, problem~\eqref{eq:problem 01}-\eqref{eq:uncertain states 001} aims to find the most likely probability law induced by an \emph{averaged process} that satisfies the end-time marginal. In terms of optimal transport (see~\cite{YC-TTG-MP:21,CL:13,ADO-BT-GB,ADO-GB:24}), problem~\eqref{eq:problem 01}-\eqref{eq:uncertain states 001} describes the deformation from $\mu_0$ to $\mu_f$, where the evolution is affected by both internal and external noise. The internal noise is modeled by parameter perturbation and the external noise is modeled by white noise.

Beyond the fact that in \emph{ all} the related works of Schr\"{o}dinger bridge, one is often concerned with a single Markov process whereas here we deal with an ensemble of Markov processes~\cite{QJ-ZA-LJS:13,BR-KN:00}, complications arise since \emph{we require the control input to be robust or independent of the variations in the parameter $\theta\in[0,1]$ of the system}. Since the object to interest is the averaged or the expectation over the parameter $\theta\in[0,1]$, this adds to the technical challenges of problem~\eqref{eq:problem 01}-\eqref{eq:uncertain states 001}. In fact, \emph{the averaged stochastic process is a non-Markov Gaussian process} (see~\cite{ADO-CY:23}).  This implies that standard stochastic control techniques~\cite{NE:67,VHR:07,KFC:12,CY-GT:15,CY-GTT-PM:15,CY-GTT-PM:16,OB:03,PPD-PM:90, DPP:91} and related references are not readily applicable due to the presence of memory. To overcome this issue, we rely on an approach that is "path-centric" rather than "process-centric". 
This leads us to a functional or path integral approach introduced in~\cite{RPF-FLV:00,RPF-ARH-DFS:10}. Path integral technique for designing optimal Markov controls was first introduced by Kappen~\cite{HJK:05,HJK:2005} and later extended to other Markovian problems~\cite{HJK:07,HJK-WW-DB:07,DB-WW-HJK:08,WW-DB-HJK:08} and reference therein. To the best of our knowledge, there is no precedent for designing non-Markovian control strategies using the path integral formalism. This work is the first. However, conditional distributions for non-Markov process have been computed using path integrals~\cite{AJM-HCL-AJB:90,AJB-AJM-TJN:90,HCL-AJM:90}. 

More precisely, we design the optimal control for~\eqref{eq:problem 01}-\eqref{eq:uncertain states 001}  as a functional integral. 
\emph{We show that the optimal control for~\eqref{eq:problem 01}-\eqref{eq:uncertain states 001} depends on the initial condition and the past and present noise}. This type of control is known as stochastic feedforward control input~\cite{NH:87,MPS:82}  and has applications in flight system control of robotics and crystal growth of power transmission networks (see~\cite{HN-DH-TDB:92,NH:87,MPS:82,HME:89} and reference therein).
In terms of optimal transport framework, our result implies that if the evolution of an initial distribution is an ensemble of stochastic systems, then one can only deform from a given initial distribution to a final distribution along non-Markovian paths using stochastic feedforward controls. Our result unlike in~\cite{CY-GT:15,CY-GTT-PM:15,CY-GTT-PM:16}, relies on the so-called averaged observability inequality~\cite{LM-ZE:14,LJ-ZE:17,LQ-ZE:16,ZE:14} which is equivalent to the invertibility of the matrix~\eqref{eq: Gramian}
(see~\cite{ADO:22}).

The organization of the paper is as follows; To build intuition, we show that the optimal control that steers a single Markov process admits a functional integral approach in Section~\ref{sec: Path Integral Representation of the Optimal Control for a single Markov process}. Then we consider the problem~\eqref{eq:problem 01}-\eqref{eq:uncertain states 001} where $\mu_0$ and $\mu_f$ are Dirac measures at specific points  in Section~\ref{Sec: Problem Statement and Main Result}. This Section aims to characterize optima local controls. The general case, where $\mu_0$ and $\mu_f$ are absolutely continuous measures is discussed in Section~\ref{sec: General Case}. We represent the optimal control for problem~\eqref{eq:problem 01}-\eqref{eq:uncertain states 001} as a functional integral. We conclude with remarks on future work in Section~\ref{sec:Conclusion and future work}.

\section{Path Integral Representation of the Optimal Control for a single Markov process}\label{sec: Path Integral Representation of the Optimal Control for a single Markov process}
To gain an intuition of the characterization of our original problem~\eqref{eq:problem 01}-\eqref{eq:uncertain states 001}, we consider the special case $A(\theta)=A$. In this current case, let $x(t):=\int_0^1X(t,\theta)d\theta$ be the averaged process. Then, 
 we have that~\eqref{eq:problem 01}-\eqref{eq:uncertain states 001} reduces to~\eqref{eq:problem 01} subject to the controlled Markov process 
\begin{align}\label{eq:uncertain states single Markov}
&d x(t)=Ax(t)d t+Bu(t)d t+\sqrt{\epsilon}Bd W(t),\nonumber\\
&x(0)\sim\mu_0\quad\text{and}\quad x(t_f)\sim\mu_f,
\end{align}
where $B:=\int_0^1B(\theta)d\theta$. In this case, the invertibility of the matrix $G_{t_f,0}$ in~\eqref{eq: Gramian} implies that the pair $(A,B)$ is controllable.  This Schrödinger bridge problem is the time-invariant version of the problems in~\cite{CY-GT:15,CY-GTT-PM:15,CY-GTT-PM:16}.  Suppose $d\mu_0=\rho_0dx_0$ and $d\mu_f=\rho_fdx_f$. Then, since the end-point marginal densities $\rho_0$ and $\rho_f$ are fixed, following from~\cite{CY-GTT-PM:2016,CL:13}, we have that that~\eqref{eq:problem 01} subject to~\eqref{eq:uncertain states single Markov} is equivalent to:
\begin{equation}\label{eq:equi–problem 03}
\min_{u}\mathbb{E}\left[\int_{0}^{t_f}\frac{1}{2}\|u(t)\|^2dt-\log\phi_f(x(t_f))\right],
\end{equation}
subject to
\begin{align}\label{eq:uncertain states 01}
&d x(t)=Ax(t)d t+Bu(t)d t+\sqrt{\epsilon}Bd W(t),\nonumber\\
&x(0)=x_0.
\end{align}
Here  $\phi_f$ together with $\phi_0$ solves~\eqref{eq: Schrod_system_cl_c}, where $q(0,x_0,t_f,x_f)=q^{\epsilon, G}(0,x_0,t_f,x_f)$ is the transition density of the passive dynamics
\begin{align}\label{eq: pass_dyn}
&d y(t)=Ay(t)d t+\sqrt{\epsilon}Bd W(t),\nonumber\\
&y(0)=x_0
\end{align}
conditioned that $y(t_f)=x_f$ a.s. This quantity is explicitly characterized as:
\begin{equation}\label{eq: tran_den_for_pass_dyn}
q^{\epsilon, G}(t,x,s,y)=(2\pi\epsilon)^{-\frac{d}{2}}(\mathrm{det}(G_{s,t}))^{-\frac{d}{2}}\exp\left(-\frac{1}{2\epsilon}\left \|y-\left(e^{A(s-t)}\right)x\right \|^2_{G_{s,t}^{-1}}\right),    
\end{equation}
where $G_{s,t}$ defined in~\eqref{eq: Gramian}, with $A(\theta)=A$ and $B(\theta)=B$, is now the controllability Gramian, assumed invertible. We proceed to prove that the optimal control admits a functional integral approach.

\begin{theorem}\label{thm: Integral Form_Markov_Case}
Let $u^*$ be the unique optimal control for~\eqref{eq:equi–problem 03}-\eqref{eq:uncertain states 01}, then $u^*$ admits the path integral formalism 
\begin{equation}\label{eq: class_case}
u^*(t,x)= \int_{\R^d}u^*(t,x|t_f,x_f)p^*(t_f,x_f|t,x)dx_f,
\end{equation}
where $u^*(\cdot|x_f)$ is the unique optimal control for the pinned process, that is~\eqref{eq:uncertain states 01} conditioned that at final time $t=t_f$, we have that $x(t_f)=x_f$ and optimality is measured by
\begin{equation}\label{eq: Mark_obj}
\min_{u}\mathbb{E}\left[\int_{0}^{t_f}\frac{1}{2}\|u(t,x(t))\|^2dt\right]   
\end{equation}
and 
\begin{equation}\label{eq: opt_cont_tran_den}
p^*(t_f,x_f|t,x)=\frac{q^{\epsilon, G}(t,x,t_f,x_f)\phi_f(x_f)}{\int_{\R^d}q^{\epsilon, G}(t,x,t_f,x_f)\phi_f(x_f)dx_f},    
\end{equation}
where $q^{\epsilon, G}$ is in~\eqref{eq: tran_den_for_pass_dyn} and
 $\phi_f$ together with $\phi_0$ solves
\begin{align}\label{eq: Schrödinger system}
\rho_0(x_0)=&\phi_0(x_0)\int_{\R^d}q^{\epsilon, G}(0,x_0,t_f,x_f)\phi_f(x_f)d x_f,\cr
\rho_f(x_f)=&\phi_f(x_f)\int_{\R^d}q^{\epsilon, G}(0,x_0,t_f,x_f)\phi_0(x_0)d x_0,
\end{align}
is the optimal controlled prior distribution.
\end{theorem}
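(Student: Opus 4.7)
The plan is to combine the Hopf--Cole linearization of the HJB equation for~\eqref{eq:equi–problem 03}-\eqref{eq:uncertain states 01} with a disintegration over the terminal state $x_f$. First, treating~\eqref{eq:equi–problem 03}-\eqref{eq:uncertain states 01} as a standard quadratic-cost stochastic control problem with terminal reward $\log\phi_f(x(t_f))$, I would apply dynamic programming: the value function $V(t,x)$ satisfies an HJB equation whose quadratic Hamiltonian admits the Hopf--Cole substitution $V=-\epsilon\log\varphi$. This reduces HJB to the backward Kolmogorov equation of the passive dynamics~\eqref{eq: pass_dyn} with terminal condition given (up to an $\epsilon$-scaling) by $\phi_f$, and a verification argument yields the optimal Markov feedback
\[
u^*(t,x) = \epsilon B^{\mathrm{T}}\nabla_x\log\varphi(t,x).
\]
By the Feynman--Kac formula, $\varphi(t,x) = \int_{\R^d}q^{\epsilon,G}(t,x,t_f,x_f)\phi_f(x_f)\,dx_f$ with $q^{\epsilon,G}$ given by~\eqref{eq: tran_den_for_pass_dyn}.

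Second, I would apply the same Hopf--Cole analysis to the pinned subproblem~\eqref{eq: Mark_obj} with the hard endpoint constraint $x(t_f)=x_f$. Formally, this corresponds to taking a terminal Dirac mass $\delta_{x_f}$ for $\varphi$, which immediately yields $\varphi_{\mathrm{pinned}}(t,x)=q^{\epsilon,G}(t,x,t_f,x_f)$ and hence
\[
u^*(t,x\mid t_f,x_f) = \epsilon B^{\mathrm{T}}\nabla_x\log q^{\epsilon,G}(t,x,t_f,x_f).
\]
Invertibility of $G_{s,t}$ (a consequence of~\eqref{eq: Gramian}) ensures that the pinned problem is feasible and that $q^{\epsilon,G}$ is a smooth, strictly positive Gaussian kernel.

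Third, the claim~\eqref{eq: class_case} follows from differentiation under the integral:
\begin{align*}
u^*(t,x)
&= \epsilon B^{\mathrm{T}}\frac{\nabla_x\varphi(t,x)}{\varphi(t,x)}\\
&= \int_{\R^d}\epsilon B^{\mathrm{T}}\,\nabla_x\log q^{\epsilon,G}(t,x,t_f,x_f)\cdot\frac{q^{\epsilon,G}(t,x,t_f,x_f)\,\phi_f(x_f)}{\varphi(t,x)}\,dx_f\\
&= \int_{\R^d}u^*(t,x\mid t_f,x_f)\,p^*(t_f,x_f\mid t,x)\,dx_f,
\end{align*}
where the last equality uses the definition~\eqref{eq: opt_cont_tran_den} of $p^*$. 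The Schr\"odinger system~\eqref{eq: Schrödinger system} is already enforced by the derivation of~\eqref{eq:equi–problem 03} from~\eqref{eq:problem 01}, which matches the marginals $x(0)\sim\mu_0$ and $x(t_f)\sim\mu_f$.

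The main obstacle is rigorously handling the pinned subproblem, whose terminal condition is singular. One option is regularization: solve~\eqref{eq: Mark_obj} with smooth $\phi_f^\delta\to\delta_{x_f}$ and pass to the limit using Gaussian bounds on $q^{\epsilon,G}$. Alternatively, one can identify the optimally controlled pinned dynamics directly as the Doob $h$-transform of~\eqref{eq: pass_dyn} with $h=q^{\epsilon,G}(\cdot,\cdot,t_f,x_f)$. Once this identification is justified, the interchange of gradient and integral above is routine given the smoothness and Gaussian decay of $q^{\epsilon,G}$.
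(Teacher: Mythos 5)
Your argument is correct, but it is not the route the paper's proof takes; it instead coincides with the \emph{verification} computation the paper carries out immediately after its proof (equations~\eqref{eq: opt_con_for_cl_c}--\eqref{eq: class_case_fun_int}). The paper's actual proof is measure-theoretic and ``path-centric'': it disintegrates the optimal path law $\mathbf{P}^*$ over the terminal state as $\mathbf{P}^*(\cdot)=\int_{\R^d}\mathbf{P}^*_{x_f}(\cdot)Q^*(x_f)dx_f$ with $Q^*(x_f)=\phi_0(x_0)q^{\epsilon,G}(0,x_0,t_f,x_f)\phi_f(x_f)$, writes the Fokker--Planck equations for the unpinned and pinned controlled densities, differentiates the coupling $\rho^*(t,x)=\int\rho^*(t,x|t_f,x_f)Q^*(x_f)dx_f$ in time, and matches the drift terms to extract $u^{*\mathrm{T}}B^{\mathrm{T}}\rho^*=\int u^{*\mathrm{T}}(\cdot|x_f)B^{\mathrm{T}}\rho^*(\cdot|x_f)Q^*dx_f$, identifying $p^*$ with~\eqref{eq: opt_cont_tran_den} by citing Todorov. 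Your route goes through dynamic programming: Hopf--Cole gives $u^*=\epsilon B^{\mathrm{T}}\nabla\log\varphi$, the pinned control is the Doob $h$-transform limit $\epsilon B^{\mathrm{T}}\nabla\log q^{\epsilon,G}$, and~\eqref{eq: class_case} falls out by differentiating $\varphi=\int q^{\epsilon,G}\phi_f$ under the integral. (Your sign convention $+\epsilon B^{\mathrm{T}}\nabla\log\varphi$ is the standard one and is internally consistent with~\eqref{eq: opt_pinn_con}; the paper's~\eqref{eq: opt_con_for_cl_c} carries a compensating sign slip in its gradient computation.) What each buys: your argument is shorter and fully explicit for this linear-Gaussian case, and it cleanly isolates the one genuinely delicate point (the singular terminal condition for the pinned problem, which the paper also handles only by citation to~\cite{CY-GT:15}). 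The paper's Fokker--Planck/disintegration argument never invokes the Markov value function or the explicit Gaussian kernel, which is precisely why it is the version that survives the passage to the non-Markovian ensemble setting of Theorem~\ref{thm: Integral Form_non-Markov_Case}, where no HJB equation is available; your HJB-based derivation would not generalize there. Both arguments are valid for the statement as given.
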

\begin{proof}
Let $\mathbf{P}^*$ be the optimal probability law induced the optimal unpinned process 
\begin{align}\label{eq: opt_uncertain states 01}
&d x(t)=Ax(t)d t+Bu^*(t,x(t))d t+\sqrt{\epsilon}Bd W(t),\nonumber\\
&x(0)=x_0.
\end{align}
and $\rho^*$ be the corresponding optimal controlled density. Consider the disintegration 
\begin{equation}\label{eq: dis_measure}
\mathbf{P}^*(\cdot)=\int_{\R^d}\mathbf{P}^*_{x_f}(\cdot)Q^*(x_f)dx_f.   
\end{equation}
Here, from~\eqref{eq: opt_joint_dis}, we have that $Q^*(x_f)=\phi_0(x_0)q^{\epsilon, G}(0,x_0,t_f,x_f)\phi_f(x_f)$ is the optimal final state density, $\mathbf{P}^*_{x_f}$ is the optimal probability law induced by the optimal pinned process
\begin{align}\label{eq: opt_uncertain states_pinned}
&d x(t)=Ax(t)d t+Bu^*(t,x(t)|t_f,x_f)d t+\sqrt{\epsilon}Bd W(t),\nonumber\\
&x(0)=x_0\quad\text{and}\quad x(t_f)=x_f.
\end{align}
where $u^*(\cdot|t_f,x_f)$, is the optimal conditional control. Here optimality is measured in~\eqref{eq: Mark_obj}. Let $\rho^*(\cdot |t_f,x_f)$ be the optimal conditional density corresponding to~\eqref{eq: opt_uncertain states_pinned}.  Then, the controlled densities $\rho^*$ and $\rho^*(\cdot |t_f,x_f)$ satisfy the Fokker-Planck equations: 
\begin{equation}\label{eq: Fokker-Planck01}
\partial_t \rho^*(t,x)=-\nabla\cdot((x^{\mathrm{T}}A+u^{*\mathrm{T}}(t)B^{\mathrm{T}})\rho^*(t,x))+\frac{\epsilon}{2}\sum_{ij} (BB^{\mathrm{T}})_{ij}\partial^2_{x_i x_j}\rho^*(t,x)    
\end{equation}
and
\begin{equation}\label{eq: Fokker-Planck_cond}
\partial_t \rho^*(t,x|t_f,x_f)=-\nabla\cdot((x^{\mathrm{T}}A+u^{*\mathrm{T}}(t,x|x_f)B^{\mathrm{T}})\rho^*(t,x|t_f,x_f))+\frac{\epsilon}{2}\sum_{ij} (BB^{\mathrm{T}})_{ij}\partial^2_{x_i x_j}\rho^*(t,x|t_f,x_f).    
\end{equation}
Futhermore, from~\eqref{eq: dis_measure} they are coupled through
\[
\rho^*(t,x)=\int_{\R^d}\rho^*(t,x|t_f,x_f)Q^*(x_f)dx_f,
\]
thus
\begin{equation}\label{eq: diff_rela_of_dens}
\partial_t\rho^*(t,x)=\int_{\R^d}\partial_t\rho^*(t,x|t_f,x_f)Q^*(x_f)dx_f.    
\end{equation}
Therefore, from~\eqref{eq: Fokker-Planck01}-\eqref{eq: diff_rela_of_dens}, we have that
\begin{equation*}
u^{*\mathrm{T}}(t)B^{\mathrm{T}}\rho^*(t,x)=\int_{\R^d}u^{*\mathrm{T}}(t|x_f)B^{\mathrm{T}}\rho^*(t,x|t_f,x_f)Q^*(x_f)dx_f.    
\end{equation*}
Thus, 
\begin{equation}\label{eq: con_rela_of_dens}
u^*(t)=\int_{\R^d}u^*(t|x_f)p^*(t_f,x_f|t,x)dx_f,    
\end{equation}
where 
\[
p^*(t_f,x_f|t,x)=\frac{\rho^*(t,x|t_f,x_f)Q^*(x_f)}{\rho^*(t,x)}.
\]
is the optimal controlled prior distribution. Following from~\cite{ET:90}, the optimal controlled prior distribution $p^*(t_f,x_f|\cdot)$ is characterized as~\eqref{eq: opt_cont_tran_den}, where the optimality is measured with respect to the Kullback Leibler ($\mathrm{KL}$) divergence with respect to the uncontrolled prior distribution of~\eqref{eq: pass_dyn}.
\end{proof}

We proceed to verify this result with a well-known result in~\cite{CY-GTT-PM:16}. Following from~\cite{CY-GTT-PM:16} the optimal control for~\eqref{eq:problem 01} subject to~\eqref{eq:uncertain states single Markov} is characterized as the Markov strategy:
\begin{equation}\label{eq: opt_con_for_cl_c}
u^*(t,x)=-\epsilon B^{\mathrm{T}}\nabla\log\varphi (t,x),    
\end{equation}
where
\begin{equation}\label{eq: phi-path}
\varphi(t,x)=\int_{\R^d}q^{\epsilon, G}(t,x,t_f,x_f)\phi_f(x_f)dx_f.   
\end{equation}

Since 
\[
\nabla\log\varphi (t,x)=\frac{\nabla\varphi (t,x)}{\varphi (t,x)},
\]
from~\eqref{eq: opt_con_for_cl_c} and~\eqref{eq: phi-path}, we have that
\begin{equation}\label{eq: opt_con_for_cl_c1}
u(t,x)=-\epsilon B^{\mathrm{T}}\int_{\R^d}\frac{\nabla q^{\epsilon, G}(t,x,t_f,x_f)\phi_f(x_f)}{\int_{\R^d}q^{\epsilon, G}(t,x,t_f,y)\phi_f(y)dy}dx_f.    
\end{equation}
From~\eqref{eq: tran_den_for_pass_dyn}, since 
\begin{align*}
\nabla q^{\epsilon, G}(t,x,t_f,x_f)=&\nabla\left(-\frac{1}{2\epsilon}\left \|x_f-\left(e^{A(t_f-t)}\right)x\right \|^2_{G_{t_f,t}^{-1}} \right)q(t,x,t_f,x_f)\\
=&\frac{1}{\epsilon}e^{A^{\mathrm{T}}(t_f-t)}G_{t_f,t}^{-1}\left(\left(e^{A(t_f-t)}\right)x-x_f\right)q(t,x,t_f,x_f),
\end{align*}
from~\eqref{eq: opt_con_for_cl_c1}, we have that 
\begin{equation}\label{eq: opt_con_for_cl_c2}
u^*(t,x)= \int_{\R^d}u^*(t,x|t_f,x_f)\left(\frac{q^{\epsilon, G}(t,x,t_f,x_f)\phi_f(x_f)}{\int_{\R^d}q^{\epsilon, G}(t,x,t_f,y)\phi_f(y)dy}\right)dx_f,     
\end{equation}
where 
\begin{align}\label{eq: opt_pinn_con}
u^*(t,x|t_f,x_f):=&-B^{\mathrm{T}}e^{A^{\mathrm{T}}(t_f-t)}G_{t_f,t}^{-1}\left(\left(e^{A(t_f-t)}\right)x-x_f\right)\nonumber\\
=&-B^{\mathrm{T}}P(t)^{-1}\left(x-\left(e^{A(t-t_f)}\right)x_f\right).
\end{align}
Here 
\begin{equation}\label{eq: inv_Lyapunov}
P(t)^{-1}:=e^{-A^{\mathrm{T}}(t-t_f)}G_{t_f,t}^{-1}e^{-A(t-t_f)}
\end{equation}
and one can check that $P(t):=e^{A(t-t_f)}G_{t_f,t}e^{A^{\mathrm{T}}(t-t_f)}$ is the Lyapunov function that satisfies
\begin{equation}\label{eq: Lyapunov equation}
\frac{dP(t)}{dt}=AP(t)+P(t)A^T-BB^T,
\end{equation}
subject to $P(t_f)=0$. Therefore, from~\cite[Section~V]{CY-GT:15}), we have that~\eqref{eq: opt_pinn_con}
is the unique optimal control for~\eqref{eq:problem 01} subject to~\eqref{eq:uncertain states single Markov} where $\mu_f=\delta_{x_f}$ is the Dirac measure.  

Therefore, from~\eqref{eq: opt_con_for_cl_c2} and~\eqref{eq: opt_cont_tran_den},  we have that 
\begin{equation}\label{eq: class_case_fun_int}
u^*(t,x)= \int_{\R^d}u^*(t,x|t_f,x_f)p^*(t_f,x_f|t,x)dx_f.    
\end{equation}
This verifies the result.
\begin{remark}\label{rem: Why generalization to non-Markov is true}
Since the optimal control in~\eqref{eq: class_case} is represented as the expectation of optimal local control, where the expectation is over sample paths of uncontrolled process~\eqref{eq: pass_dyn}, the representation in~\eqref{eq: class_case} is "path-centric", rather than the "process-centric" in~\eqref{eq: opt_con_for_cl_c}. The advantage of this "path-centric" representation is that it sidesteps the Markov/non-Markov distinction and represents a more general framework of optimal control. Thus, the path integral method can be generalized to non-Markovian processes~\cite{AJM-HCL-AJB:90,AJB-AJM-TJN:90,HCL-AJM:90}. 
\end{remark}

 \section{Stochastic Bridge of Dirac measures}\label{Sec: Problem Statement and Main Result}
We will represent the optimal control for~\eqref{eq:problem 01}-\eqref{eq:uncertain states 001} as the functional integral of local controls. This Section aims to provide the characterization of optimal local controls. To this end, consider an ensemble of processes governed by~\eqref{eq: ens_passive_dyn}. \emph{Our goal is to find solutions that are conditioned to have} 
\begin{equation}\label{eq:final condition}
\int_{0}^{1} Y(t_f,\theta)d\theta=x_f, a.s.
\end{equation}

The fact that the final conditional state is parameter-independent motivates the quest to find a parameter-independent control.  If the control depends on $\theta\in[0,1]$, it might lead to different behaviours for different realizations, making it challenging to ensure that~\eqref{eq:final condition} is satisfied a.s.  

Following from~\cite{PPD-PM:90,DPP:91}, the solutions of~\eqref{eq: ens_passive_dyn}  conditioned to be~\eqref{eq:final condition} is characterized by the stochastic optimal control problem
\begin{equation}\label{eq:problem 1}
{\bf Problem~2:}\quad\quad\min_{u\in \mathcal{U}}\mathbb{E}\left[\int_{0}^{t_f}\frac{1}{2}\|u(t)\|^2dt\right],
\end{equation}
subject to 
\begin{align}\label{eq:uncertain states}
&d X(t,\theta)=A(\theta)X(t,\theta)d t+B(\theta)u(t)d t+\sqrt{\epsilon}B(\theta)d W(t),\cr
&X(0,\theta)=x_0\text{ a.s and }\int_{0}^{1} X(t_f,\theta)d\theta=x_f, a.s.
\end{align}
To be more precise, if $u\in\mathcal{U}\subset \mathrm{L}^2([0,t_f];\mathbb{R}^m)$, then;
\begin{enumerate}
\item $u(t)$ is $x(t)$-measurable, where $x(t):=\int_{0}^{1} X(t,\theta)d\theta$,  for all $t\in [0,t_f]$,
\item $\mathbb{E}\left[\int_{0}^{t_f}\frac{1}{2}\|u(t)\|^2dt\right]<\infty$,
\end{enumerate}
We proceed to the main theorem of this Section.

\begin{theorem}\label{thm: Main}
Suppose $G_{t_f,s}$, for all $0\leq s<t_f$, in~\eqref{eq: Gramian} is invertible. Then the optimal control for~\eqref{eq:problem 1} subject to~\eqref{eq:uncertain states} is characterized as 
\begin{multline}\label{eq: optimal control}
  u^*(t|x_0,x_f)= -\sqrt{\epsilon}\int_0^t\Phi(t_f,t)^T G_{t_f,\tau}^{-1}\Phi(t_f,\tau) dW(\tau)\\+\Phi(t_f,t)^{T}G_{t_f,0}^{-1} \left(x_f-\left(\int_{0}^{1}e^{A(\theta)t_f}d\theta\right) x_0\right).    
 \end{multline} 
 where $\Phi(t_f,\tau)$ is defined in~\eqref{eq: non-transition matrix}. 
\end{theorem}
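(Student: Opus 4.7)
The plan is to first reduce Problem~2 to a Markovian stochastic linear-quadratic steering problem on an auxiliary state, then solve it via the Hamilton--Jacobi--Bellman (HJB) equation, and finally integrate the resulting feedback law to recover the open-loop formula in~\eqref{eq: optimal control}.

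First, I would use the averaged-process representation~\eqref{eq: ave_output} to rewrite the a.s.\ terminal condition as
\begin{equation*}
x_f = \left(\int_0^1 e^{A(\theta)t_f}d\theta\right)x_0 + \int_0^{t_f}\Phi(t_f,\tau)\left(u(\tau)d\tau + \sqrt{\epsilon}dW(\tau)\right),
\end{equation*}
and then introduce the auxiliary state
\begin{equation*}
z(t):= \int_0^t\Phi(t_f,\tau)\big(u(\tau)d\tau+\sqrt{\epsilon}dW(\tau)\big),
\end{equation*}
which satisfies $dz(t) = \Phi(t_f,t)u(t)dt + \sqrt{\epsilon}\Phi(t_f,t)dW(t)$ with $z(0)=0$; the terminal constraint then becomes the Dirac endpoint $z(t_f) = c := x_f - \big(\int_0^1e^{A(\theta)t_f}d\theta\big)x_0$. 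The crucial observation is that although the averaged process is non-Markov, the auxiliary state $z$ is a Markov diffusion, so standard stochastic dynamic programming applies to it even though it does not apply to the original averaged state.

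Next, I would apply dynamic programming to this transformed problem. The HJB equation for the value $V(t,z)$ carries the singular terminal condition $V(t_f,c)=0$ and $V(t_f,z)=+\infty$ for $z\neq c$. With the quadratic ansatz $V(t,z) = \tfrac{1}{2}(z-c)^T Q(t)(z-c) + g(t)$, HJB reduces to the matrix Riccati equation $\dot{Q} = Q\,\Phi(t_f,t)\Phi(t_f,t)^T Q$ with $Q(t_f)=+\infty$. Inverting via $\Sigma(t) := Q(t)^{-1}$ linearises this to $\dot{\Sigma} = -\Phi(t_f,t)\Phi(t_f,t)^T$, $\Sigma(t_f)=0$, whose solution is precisely the Gramian $\Sigma(t)=G_{t_f,t}$ of~\eqref{eq: Gramian}. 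The hypothesis that $G_{t_f,s}$ is invertible for $s<t_f$ then ensures that $Q(t)=G_{t_f,t}^{-1}$ is well defined and gives the optimal feedback
\begin{equation*}
u^*(t) = -\Phi(t_f,t)^T Q(t)(z(t)-c) = \Phi(t_f,t)^T G_{t_f,t}^{-1}(c - z(t)).
\end{equation*}

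Finally, I would convert this feedback into the open-loop expression~\eqref{eq: optimal control}. Plugging the feedback into the $z$-dynamics yields a linear closed-loop SDE for $c-z(t)$ which can be solved explicitly by a stochastic-Fubini / integrating-factor manipulation, using the elementary identity $\int_\tau^t \Phi(t_f,s)\Phi(t_f,s)^T ds = G_{t_f,\tau}-G_{t_f,t}$; equivalently, one can postulate the right-hand side of~\eqref{eq: optimal control} directly and verify that it satisfies the feedback relation $u^*(t) = \Phi(t_f,t)^T G_{t_f,t}^{-1}(c - z(t))$ along its own trajectory. I expect the main obstacle to be the singular terminal condition in HJB, which is handled cleanly by the $\Sigma=Q^{-1}$ inversion in the spirit of the Lyapunov treatment of the single-Markov case in Section~\ref{sec: Path Integral Representation of the Optimal Control for a single Markov process}. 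A secondary subtlety is verifying $\mathcal{U}$-admissibility (that $u^*$ is $x(t)$-measurable), which follows a~posteriori because the stochastic integral in~\eqref{eq: optimal control} is an adapted functional of $W$, and the relevant past of $W$ is recoverable from the averaged path $\{x(s):s\le t\}$ together with the initial datum $x_0$.
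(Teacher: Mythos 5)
Your proposal is correct in substance but follows a genuinely different route from the paper. The paper never Markovianizes: it replaces the hard endpoint constraint by a soft penalty $a\|x(t_f)-x_f\|^2$, discretizes time, parametrizes the admissible control directly in the feedforward form $u_{a,k,i}=\sum_{j\le i}F_{k,i,j}\triangle W_j+G_{i,k-1}x_f$, solves the resulting finite-dimensional strictly convex problems over the structured matrix classes $\mathcal{T}_{mk}$ and $\mathcal{M}_{mk}$ by Lagrange multipliers, and then passes to the double limit $k\to\infty$, $a\to\infty$. You instead introduce the auxiliary Markov diffusion $z(t)$ with $dz=\Phi(t_f,t)u\,dt+\sqrt{\epsilon}\,\Phi(t_f,t)dW$, reduce to a classical pinned linear steering problem, solve the singular HJB via the Riccati--Lyapunov inversion $\Sigma=Q^{-1}$, $\dot{\Sigma}=-\Phi\Phi^{T}$, $\Sigma(t_f)=0$, so $Q(t)=G_{t_f,t}^{-1}$, and then integrate the closed-loop SDE using the transition matrix $G_{t_f,t}G_{t_f,\tau}^{-1}$ of $\dot M=-\Phi\Phi^{T}G_{t_f,t}^{-1}M$; this does reproduce \eqref{eq: optimal control} exactly, and it makes the appearance of the Gramian and the link to \eqref{eq: opt_pinn_con}--\eqref{eq: Lyapunov equation} in Section~\ref{sec: Path Integral Representation of the Optimal Control for a single Markov process} transparent, while avoiding the discretization and the double limit. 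What your route loses is that the quadratic ansatz for the singular terminal condition $V(t_f,z)=+\infty$ for $z\neq c$ is only formal: you still owe a verification argument that this feedback is optimal for the hard-constraint problem (e.g., a completion-of-squares identity, a Girsanov/relative-entropy argument, or precisely the penalty limit $a\to\infty$ that the paper uses as its starting point), whereas the paper's soft-constraint formulation builds that approximation in from the outset and also produces the feedforward kernels $F$ and $G$ directly in the form in which they appear in \eqref{eq: optimal control}. Your closing remark on $\mathcal{U}$-admissibility (recovering the past of $W$ from the averaged path) is no weaker than what the paper itself offers on this point.
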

We see that the characterization of the local optimal control is a departure from the conventional stochastic optimal control literature, where the optimal control assumes the form of a Markov strategy~\cite{OB:03,VHR:07}.  In particular, when dealing with a non-Markov process subject to parameter perturbations, \emph{the optimal control depends on the initial condition and the entire history of the noise}. For this reason, the optimal control that steers the stochastic bridge adopts an approach—\emph{a stochastic feedforward input}, to be precise. 

\begin{remark}
Before we delve into the proof of Theorem~\eqref{thm: Main}, we must verify the result with the special case $A(\theta)=A$. In this case the problem of requiring the behaviour of 
 \begin{align}\label{eq:linear process}
dy(t)=&Ay(t)dt+\sqrt{\epsilon}Bd W(t),\cr
y(0)=&x_0,\quad\text{almost surely (a.s)},
\end{align}
where $y(t)=\int_0^1Y(t,\theta)d\theta$ is the averaged process and $B=\left(\int_0^1B(\theta)d\theta\right)$, at the final time to be
$x(t_f)=x_f$ a.s is equivalent to optimally changing the drift in
\begin{align}\label{eq: chan_drift_sys}
&d x(t)=Ax(t)d t+Bu(t)d t+\sqrt{\epsilon}Bd W(t),\nonumber\\
&x(0)=x_0\quad\text{and}\quad x(t_f)=x_f\quad \text{a.s},
\end{align} 
where optimality is measured in~\eqref{eq:problem 1}. From~\eqref{eq: opt_pinn_con}-\eqref{eq: inv_Lyapunov}, the optimal pinned local control is characterized as  
\begin{equation}\label{eq: orig_loc_opt}
u^*(t,x(t)|x_f)=-B^{\mathrm{T}}e^{-A^{\mathrm{T}}(t-t_f)}G_{t_f,t}^{-1}e^{-A(t-t_f)}x(t)+B^{\mathrm{T}}e^{-A^{\mathrm{T}}(t-t_f)}G_{t_f,t}^{-1}x_f.    
\end{equation}
By substituting the above optimal pinned control to~\eqref{eq: chan_drift_sys}, we get the optimal pinned process to be
\begin{align}\label{eq: pinned opt_proc}
d x^*(t)=&(A-BB^{\mathrm{T}}e^{-A^{\mathrm{T}}(t-t_f)}G_{t_f,t}^{-1}e^{-A(t-t_f)})x^*(t)dt+B^{\mathrm{T}}e^{-A^{\mathrm{T}}(t-t_f)}G_{t_f,t}^{-1}x_fdt+\sqrt{\epsilon}Bd W(t)\nonumber\\
x(0)=&x_0.
\end{align}
This solution $x^*(t)$ is substituted back in~\eqref{eq: orig_loc_opt} and we demonstrate it coincides with the characterization in~\eqref{eq: optimal control}. Due to the heavy computation, we demonstrate this for $A(\theta)=A=0$. Then from~\eqref{eq: Gramian}, we have that $G_{t_f,\tau}=(t_f-\tau)BB^{\mathrm{T}}$ and from~\eqref{eq: non-transition matrix}, we have that $\Phi(t_f,t)=B\in\R^{d\times d}$, for all $t\geq t_f$ is invertible. Therefore, the optimal local control in~\eqref{eq: orig_loc_opt} and the corresponding optimal state trajectory in~\eqref{eq: pinned opt_proc} reduces to 
\begin{equation}\label{eq: reduced_orig_loc_opt}
u^*(t,x(t)|x_f)=-\frac{B^{-1}}{t_f-t}(x(t)-x_f).   
\end{equation}
and
\begin{align}\label{eq: reduced_pinned opt_proc}
&d x^*(t)=\left(-\frac{x^*(t)}{t_f-t}+\frac{x_f}{t_f-t}\right)dt+\sqrt{\epsilon}Bd W(t),\nonumber\\
&x(0)=x_0\quad a.s
\end{align}
respectively. Since 
\[
x^*(t)=\frac{t_f-t}{t_f}x_0+\frac{t}{t_f}x_f+\sqrt{\epsilon}\int_0^t\frac{t_f-t}{t_f-\tau}BdW(\tau),
\]
by substituting $x^*(t)$ in~\eqref{eq: reduced_orig_loc_opt} we have that
\begin{equation*}
u^*(t|x_0,x_f)=-\frac{B^{-1}}{t_f-t}\bigg(\frac{t_f-t}{t_f}x_0+\frac{t-t_f}{t_f}x_f+\sqrt{\epsilon}\int_0^t\frac{t_f-t}{t_f-\tau}BdW(\tau)\bigg)    
\end{equation*}
and hence
\begin{equation*}
u^*(t|x_0,x_f)=-\sqrt{\epsilon}\int_0^t\frac{1}{t_f-\tau}dW(\tau)+\frac{B^{-1}}{t_f}(x_f-x_0).   
\end{equation*}
This coincides with the proposed characterization~\eqref{eq: optimal control} for the simple case.   
\end{remark}

\subsection{Proof of Theorem~\ref{thm: Main}}
Note that re-centring the initial ensemble $X(0,\theta)$ of systems at the origin $0$ holds no bearing on the system's characterization, given its inherent linearity. For simplicity, from now on we take $X(0,\theta)=0$ a.s.

We outline the proof of Theorem~\ref{thm: Main} as follows: We first state a free-endpoint alternative formulation (cf. \eqref{eq: free end-point problem}–\eqref{eq: output_alter}) and consider the corresponding time discretization (cf. \eqref{eq: discrete-time free end-point problem}–\eqref{eq: discrete-time system}). The characterization in \eqref{eq: optimal control} is derived from the solution of the discretized free-endpoint alternative formulation. The technique of considering an alternative free-endpoint stochastic optimal control is motivated by \cite{CY-GT:15,NE:67,VHR:07,KFC:12}.

Following from~\cite{ADO-CY:23}, since the averaged process in~\eqref{eq: ave_output} is non-Markovian, we circumvent this complexity considering
\begin{equation}\label{eq: free end-point problem}
\min_{u}\mathbb{E}  \bigg[a(x(t_f)-x_f)^T(x(t_f)-x_f) +\int_0^{t_f} \frac{1}{2}\|u_a(t)\|^2 d\tau\bigg] 
\end{equation}
subject to
\begin{equation}\label{eq: output_alter}
 x(t)=\int_0^t \Phi(t,\tau)(u_a(\tau)d\tau +\sqrt{\epsilon} dW(\tau)).
\end{equation}
The main idea of the above formulation is that if  
\[
\lim_{a\rightarrow\infty}\|u^*_{a}-u^*\|^2_{L^2(\mathbb{P})}=0,
\]
holds, where $\mathbb{P}$ is the probability on the sample space $\Omega$. Then $u^*$ is the optimal control for~\eqref{eq:problem 1}-\eqref{eq:uncertain states}. 
The following result is important to design the optimal control $u_a^*$ for~\eqref{eq: free end-point problem}-\eqref{eq: output_alter}.
\begin{proposition}
Consider the discrete-time optimal control problem
\begin{equation}\label{eq: discrete-time free end-point problem}
\min_{u_{a,k}}\mathbb{E}  \bigg[a(x_k-x_f)^T(x_k-x_f) +\frac{1}{2}\sum_{i=0}^{k-1} u_{a,k,i}^Tu_{a,k,i}\triangle t_k\bigg], 
\end{equation}
where $a>0$ and
\begin{equation}\label{eq: discrete-time system}
x_k=\sum_{i=0}^{k-1}\Phi(t_f,t_i)\left(u_{a,k,i}\triangle t_k+\sqrt{\epsilon}\triangle W_{i} \right).
\end{equation}
Here $P:=\{0= t_0<t_1<\dots<t_{k-1}=t_f\}$ be a regular partition with constant step size $\triangle t_k=t_{i+1}-t_{i}$, $\triangle W_{i}:=W(t_{i+1})-W(t_{i})\in\R^m$,  $u_{a,k}:=(u_{a,k,0},\dots,u_{a,k,k-1})\in\R^{mk}$, where we assume that $u_{a,k,i}$ is constant $x_i$-measurable in $L^2[t_i,t_{i+1})$, where $i\in\{0,\dots,k-1\}$. Then the optimal control for~\eqref{eq: discrete-time free end-point problem}-~\eqref{eq: discrete-time system} is characterized as
 \begin{multline}\label{eq: optimal control for free-endpoint problem}
 u_{a,k,i}^*= -\sqrt{\epsilon}\sum_{j=0}^i\Phi(t_f,t_i)^T (\sum_{\alpha=j}^{k-1}\Phi(t_f,t_{\alpha})\Phi(t_f,t_\alpha)^T\triangle t_k+\frac{1}{2a}I_{n})^{-1}\\\Phi(t_f,t_j)\triangle W_j+\Phi(t_f,t_i)^{T}(\sum_{\alpha=0}^{k-1}\Phi(t_f,t_{\alpha})\Phi(t_f,t_\alpha)^T\triangle t_k+\frac{1}{2a}I_{n})^{-1}x_f. 
 \end{multline}       
\end{proposition}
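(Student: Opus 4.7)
My plan is to solve this finite-horizon stochastic LQ problem by backward dynamic programming applied to the controlled ``state''
$$
s_i := \sum_{j=0}^{i-1}\Phi(t_f,t_j)\bigl(u_{a,k,j}\triangle t_k + \sqrt{\epsilon}\triangle W_j\bigr),\qquad s_0 = 0,\ s_k = x_k,
$$
whose one-step increment at stage $i$ is $\Phi(t_f,t_i)\bigl(u_{a,k,i}\triangle t_k + \sqrt{\epsilon}\triangle W_i\bigr)$. In these coordinates the objective is a textbook LQG cost with terminal penalty $a\|s_k - x_f\|^2$ and running cost $\tfrac{\triangle t_k}{2}\|u_{a,k,i}\|^2$, so Bellman's principle applies directly and the problem is strictly convex and quadratic in the adapted control sequence.

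Next I would posit the quadratic ansatz $V_i(s) = \tfrac{1}{2}(s - x_f)^T \Sigma_i^{-1}(s - x_f) + \gamma_i$ for the cost-to-go, pinned by the terminal condition $\Sigma_k = \tfrac{1}{2a}I_n$. Plugging into the Bellman equation, completing the square in $u_{a,k,i}$, and invoking the identity $(I + \triangle t_k\, M^T \Sigma^{-1}M)^{-1}M^T \Sigma^{-1} = M^T(\Sigma + \triangle t_k\, MM^T)^{-1}$ with $M = \Phi(t_f,t_i)$, I would read off the Markov feedback
$$
u_{a,k,i}^* = -\Phi(t_f,t_i)^T \Sigma_i^{-1}(s_i - x_f),
$$
together with the Riccati-type recursion $\Sigma_i = \Sigma_{i+1} + \Phi(t_f,t_i)\Phi(t_f,t_i)^T \triangle t_k$. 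This telescopes explicitly to
$$
\Sigma_i = \sum_{\alpha = i}^{k-1}\Phi(t_f,t_\alpha)\Phi(t_f,t_\alpha)^T \triangle t_k + \tfrac{1}{2a}I_n,
$$
which is precisely the matrix appearing inside the brackets of~\eqref{eq: optimal control for free-endpoint problem}. The $\sqrt{\epsilon}\triangle W_i$ term enters only the constant $\gamma_i$ because its conditional mean is zero, so the ansatz closes.

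The last step is to unfold this Markov feedback into the announced open-loop, path-integral form. Substituting $u_{a,k,i}^*$ back into the dynamics and using $\Sigma_i - \Sigma_{i+1} = \Phi(t_f,t_i)\Phi(t_f,t_i)^T\triangle t_k$ gives the reverse-time recurrence
$$
s_{i+1} - x_f = \Sigma_{i+1}\Sigma_i^{-1}(s_i - x_f) + \sqrt{\epsilon}\,\Phi(t_f,t_i)\triangle W_i.
$$
Iterating from $s_0 = 0$ and collecting the contributions of $x_f$ and of each $\triangle W_j$, the consecutive products $\Sigma_i \Sigma_{i-1}^{-1}\cdots \Sigma_{j+1}\Sigma_j^{-1}$ telescope so that the deterministic feedforward piece collapses to $\Phi(t_f,t_i)^T \Sigma_0^{-1}x_f$ and each noise increment acquires a single matrix coefficient of the form stated in~\eqref{eq: optimal control for free-endpoint problem}.

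The main obstacle I anticipate is this final bookkeeping; the Bellman derivation of the feedback and of $\Sigma_i$ is standard LQG and the ansatz is verified by direct substitution, but carefully checking that the iterated reverse-time products collapse into the single-matrix coefficients attached to each $\triangle W_j$ (rather than into a more complicated product of $\Sigma_\ell$'s) demands some patience. Uniqueness of the optimum is automatic: the cost is strictly convex in the adapted sequence $(u_{a,k,0},\dots,u_{a,k,k-1})$, so the Bellman stationarity condition determines the minimizer uniquely.
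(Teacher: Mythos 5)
Your proposal is correct in substance but takes a genuinely different route from the paper. The paper never invokes dynamic programming: it parametrizes the admissible control directly as an affine function of the noise increments and of $x_f$, namely $u_{a,k,i}=\sum_{j=0}^{i}F_{k,i,j}\triangle W_j+G_{i,k-1}x_f$, substitutes this into the cost, splits the resulting deterministic matrix optimization into two decoupled strictly convex subproblems (one for the feedforward gains $\mathrm{G}_k$, one for the noise gains $\mathrm{F}_k$), and solves each by Lagrange multipliers that enforce the block-triangular causality constraint; the open-loop gains in \eqref{eq: optimal control for free-endpoint problem} are read off directly from the first-order conditions. You instead run backward induction on the partial-sum state $s_i$, obtain the Markov feedback $u_{a,k,i}^*=-\Phi(t_f,t_i)^T\Sigma_i^{-1}(s_i-x_f)$ with the telescoping recursion $\Sigma_i=\Sigma_{i+1}+\Phi(t_f,t_i)\Phi(t_f,t_i)^T\triangle t_k$, and then unfold the closed loop. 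Your route buys a cleaner and more self-contained verification: the push-through/Woodbury identity closes the quadratic ansatz, uniqueness follows from strict convexity, and the final telescoping does collapse exactly as you claim (one checks $s_{i+1}-x_f=\Sigma_{i+1}\Sigma_i^{-1}(s_i-x_f)+\sqrt{\epsilon}\,\Phi(t_f,t_i)\triangle W_i$, so the products $\Sigma_i\Sigma_{j+1}^{-1}$ survive and are then hit by $\Sigma_i^{-1}$ from the feedback gain). The paper's route avoids the unfolding step but at the price of heavier block-matrix bookkeeping, and its treatment of the $\mathrm{F}_k$ subproblem is only sketched.

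One concrete discrepancy you should flag: your derivation yields $u_{a,k,i}^*=\Phi(t_f,t_i)^T\Sigma_0^{-1}x_f-\sqrt{\epsilon}\sum_{j=0}^{i-1}\Phi(t_f,t_i)^T\Sigma_{j+1}^{-1}\Phi(t_f,t_j)\triangle W_j$, whereas \eqref{eq: optimal control for free-endpoint problem} has the sum running to $j=i$ with $\Sigma_j^{-1}$ in place of $\Sigma_{j+1}^{-1}$. This is not an error on your part but a difference in the admissible class: the paper's ansatz lets $u_{a,k,i}$ depend on the current increment $\triangle W_i$ (its triangular constraint zeroes only the blocks with $i<j$), while your DP feedback is measurable with respect to $s_i$, hence only on $\triangle W_0,\dots,\triangle W_{i-1}$. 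Both discrete expressions converge to the same continuum limit \eqref{eq: optimal control}, which is all that is used downstream, so the difference is immaterial for Theorem~\ref{thm: Main}; but your argument as written will not literally reproduce the displayed discrete formula, and you should say so rather than assert that the bookkeeping lands exactly on \eqref{eq: optimal control for free-endpoint problem}.
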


\begin{proof} 
To facilitate the upcoming proof, we'll adopt a convention: unless otherwise specified, we'll represent block matrices using uppercase letters.

We are confronted with a discrete-time problem~\eqref{eq: discrete-time free end-point problem}-\eqref{eq: discrete-time system} which we condense into a more manageable form:
\begin{equation}\label{eq: compact form of the discrete-time free end-point problem}
\min_{u_{a,k}}J_k(u_{a,k}):=\mathbb{E}  \bigg[a\mathrm{Tr}\left((x_k-x_f)(x_k-x_f)^T\right) +\\\frac{\triangle t_k}{2}\mathrm{Tr}\left(u_{a,k}u_{a,k}^T\right)\bigg],    
\end{equation}
subject to 
\begin{equation}\label{eq: compact form of discrete-time system}
x_{k}=\Phi_k\left(u_{a,k}\triangle t_k+\sqrt{\epsilon}\gamma_k \right),
\end{equation}
where $\gamma_k=(\triangle W_{0},\dots,\triangle W_{k-1})\in\R^{mk}$ and $\Phi_k=(\Phi_0(t_f),\dots,\Phi_{k-1}
(t_f))\in\R^{d\times mk}$, where $\Phi_i(t_f):=\Phi(t_f,t_i)\in\R^{d\times m}$ in~\eqref{eq: non-transition matrix}. Since $u_{a,k,i}$ is constant $x_i$-measurable, for all $i=0,\dots,k-1$, it is characterized as 
\begin{equation}\label{eq: opt_chara_dis}
u_{a,k,i}=\sum_{j=0}^iF_{k,i,j}\triangle W_{j}+G_{i,k-1}x_f,    
\end{equation}
where $F_{k,i,j}\in\R^{m\times m}$ and $G_{i,k-1}\in\R^{m\times d}$ are to be determined. Specifically, $F_{k,i,j}:=F(t_i,t_j)\in\R^{m\times m}$, for all $t_j<t_i$, and $G_{i,k-1}:=G(t_i,t_{k-1})\in\R^{m\times d}$. For later use, we express~\eqref{eq: opt_chara_dis} compactly as:
\begin{equation}\label{eq: admissible discrete control}
u_{a,k}=\mathrm{F}_k\gamma_k+\mathrm{G}_k\mathrm{vec}(x_f),    
\end{equation}
where $\mathrm{vec}(x_f)\in\R^{dk}$ is the vector with all its entries $x_f\in\R^d$, the block matrices $\mathrm{F}_k\in\mathcal{T}_{mk}$ and $\mathrm{G}_k\in\mathcal{M}_{mk}$ where 
\begin{equation*}
\mathcal{T}_{mk}:=\{ \mathrm{F}_k\in\R^{mk\times mk} : \chi_{mk}\odot \mathrm{F}_k=0_{mk}\},   
\end{equation*}
and
\begin{equation*}
\mathcal{M}_{mk}:=\{ \mathrm{G}_k\in\R^{mk\times dk} : \Gamma_{mk}\odot \mathrm{G}_k=0_{mk}\}.  
\end{equation*}
Here, $0_{mk}\in\R^{mk\times mk}$ is the zero matrix, $\chi_{mk}$ is the indicator matrix defined as:
\begin{equation}\label{eq: up_tri_0}
\chi_{mk}:=\begin{cases}
\mathbbm{1}_{m}&\text{if }\quad i<j\cr
0_{m} &\text{otherwise},
\end{cases}
\end{equation}
where $\mathbbm{1}_{m}\in\R^{m\times m}$ is the matrix where all entries are $1$s. Similarly, $\Gamma_{mk}$ is defined as:
\begin{equation}\label{eq: last col_0}
\Gamma_{mk}:=\begin{cases}
\mathbbm{1}_{m\times d}&\text{if }\quad j\neq k-1\cr
0_{m\times d} &\text{if }\quad j=k-1
\end{cases}
\end{equation}
with all its entries in the last column set to $0_{m\times d}\in\R^{m\times d}$, which represents the zero matrix. All other entries are assigned the block matrix $\mathbbm{1}_{m\times d}\in\R^{m\times d}$. The operation $\odot$ denotes the Hadamard matrix multiplication operation.
By substituting~\eqref{eq: admissible discrete control} in~\eqref{eq: compact form of discrete-time system}, we arrive at 
\begin{equation}\label{eq: compact form of discrete-time system2}
x_{k}=\Phi_k\left(\mathrm{F}_k\triangle t_k+\sqrt{\epsilon}\mathrm{I}_{mk}\right)\gamma_k+\Phi_k \mathrm{G}_k\mathrm{vec}(x_f)\triangle t_k.
\end{equation}
Here, $\mathrm{I}_{mk}\in\R^{mk\times mk}$ represents the identity matrix. Subsequently, by substituting~\eqref{eq: admissible discrete control} and~\eqref{eq: compact form of discrete-time system2} back into~\eqref{eq: compact form of the discrete-time free end-point problem} and using the fact that $\mathcal{T}_{mk}\cap\mathcal{M}_{mk}=\emptyset$, 
we have two separate strictly convex subproblems 
\begin{multline}\label{eq: mini over LR}
\min_{\mathrm{G}_k\in\mathcal{M}_{mk}} \mathrm{Tr}\bigg(a\bigg(\Phi_k \mathrm{G}_k\mathrm{vec}(x_f)\triangle t_k-x_f\bigg)\bigg(\Phi_k \mathrm{G}_k\mathrm{vec}(x_f)\triangle t_k-x_f\bigg)^T\\+\frac{\triangle t_k}{2}\mathrm{G}_k\mathrm{vec}(x_f)\mathrm{vec}(x_f)^T\mathrm{G}_k^T\bigg)      
\end{multline}
and
\begin{multline}\label{eq: mini over LT}
\min_{\mathrm{F}_k\in\mathcal{T}_{mk}}\mathrm{Tr}\bigg(a\Phi_k\bigg(\mathrm{F}_k\triangle t_k+\sqrt{\epsilon}\mathrm{I}_{mk}\bigg)\bigg(\mathrm{F}_k\triangle t_k+\sqrt{\epsilon}\mathrm{I}_{mk}\bigg)^T\Phi_k^T\triangle t_k+\frac{(\triangle t_k)^2}{2}\mathrm{F}_k\mathrm{F}_k^T\bigg)
\end{multline}
over compact sets. Hence the standard Lagrange method (see~\cite{BSP-VL:04}) gives the unique solution. 

For problem~\eqref{eq: mini over LR}, this necessitates the introduction of a Lagrange multiplier, denoted as $\lambda_k\in\R^{mk\times mk}$, such that the pair $(\mathrm{G}_k,\lambda_k)$ satisfy the first-order conditions: 
\begin{multline*}
\Gamma_{mk}\odot \lambda_k+2a\Phi_k^T\Phi_k\mathrm{G}_k\mathrm{vec}(x_f)\mathrm{vec}(x_f)^T(\triangle t_k)^2-\\2a\Phi_k^Tx_f\mathrm{vec}(x_f)^T\triangle t_k+\mathrm{G}_k\mathrm{vec}(x_f)\mathrm{vec}(x_f)^T\triangle t_k=0_{mk}  
\end{multline*}
and
\[
\Gamma_{mk}\odot \mathrm{G}_k=0_{mk}.
\]
Hence, we have that
\begin{multline}\label{eq: Optimal G equation}
\mathrm{G}^*_k\mathrm{vec}(x_f)\mathrm{vec}(x_f)^T=2a\left(\mathrm{I}_{mk}-\Phi_k^TA_k\Phi_k\triangle t_k\right)\left(\Phi_k^Tx_f\mathrm{vec}(x_f)^T-\Gamma_{mk}\odot\frac{\lambda_k}{2a(\triangle t_k)}\right)    
\end{multline}
and
\begin{equation}\label{eq: Lag_eqn_G}
\Gamma_{mk}\odot\bigg(\left(\mathrm{I}_{mk}-\Phi_k^TA_k\Phi_k\triangle t_k\right)\left(\Phi_k^Tx_f\mathrm{vec}(x_f)^T-\Gamma_{mk}\odot\frac{\lambda_k}{2a(\triangle t_k)}\right)\bigg)=0_{mk}, 
\end{equation}
where
\begin{align*}
 A_k=&\left(\Phi_k\Phi_k^T\triangle t_k+\frac{1}{2a}\mathrm{I}_{dk}\right)^{-1}\cr=&\left(\sum_{\alpha=0}^{k-1}\Phi_{\alpha}(t_f)\Phi_{\alpha}(t_f)^T\triangle t_k+\frac{1}{2a}\mathrm{I}_{dk}\right)^{-1}.  
\end{align*}
By computing $\frac{\lambda_k}{2a(\triangle t_k)}$ in~\eqref{eq: Lag_eqn_G} and substituting in~\eqref{eq: Optimal G equation}, we get that
\begin{equation}\label{eq: last column of optimal G }
G^*_{i,k-1}=\Phi(t_f,t_i)^{T}\bigg(\sum_{\alpha=0}^{k-1}\Phi(t_f,t_{\alpha})\Phi(t_f,t_\alpha)^T\triangle t_k+\frac{1}{2a}I_{n}\bigg)^{-1}.  
\end{equation}

Similarly, for problem~\eqref{eq: mini over LT}, without repeating arguments, one can show that for $i\geq j$ and $j>0$, the following characterization holds:
\begin{equation}\label{eq: final optimal entry for F}
F^*_{k,i,j}=-\sqrt{\epsilon}\Phi(t_f,t_i)^{T}\bigg(\sum_{\alpha=j}^{k-1}\Phi(t_f,t_{\alpha})\Phi(t_f,t_\alpha)^T\triangle t_k+\frac{1}{2a}I_{n}\bigg)^{-1}\Phi(t_f,t_j).
\end{equation}
Therefore, from~\eqref{eq: opt_chara_dis},~\eqref{eq: last column of optimal G } and~\eqref{eq: final optimal entry for F}, we have that the optimal control for~\eqref{eq: discrete-time free end-point problem}-~\eqref{eq: discrete-time system} is~\eqref{eq: optimal control for free-endpoint problem}.
\end{proof}

We now proceed to the proof of Theorem~\ref{thm: Main}.

\begin{proof}[Proof of Theorem~\ref{thm: Main}]
From~\eqref{eq: optimal control for free-endpoint problem},
and~\cite[Definition~3.1.6 and Corollary~3.1.8]{OB:03}, we conclude that 
 \begin{equation*}
\lim_{a\rightarrow\infty}\lim_{k\rightarrow\infty}\| u_{a,k,i}^*-u^*\|^2_{L^2(\mathbb{P})}=0,
 \end{equation*}
 where 
 \begin{equation*}
  u^*(t|x_f)= -\sqrt{\epsilon}\int_0^t\Phi(t_f,t)^T G_{t_f,\tau}^{-1}\Phi(t_f,\tau) dW(\tau)+\Phi(t_f,t)^{T}G_{t_f,0}^{-1}x_f.    
 \end{equation*}
\end{proof}

By translating the averaged process in~\eqref{eq: ave_output}, given any arbitrary $x_0$, the optimal control is characterized as
\begin{multline}\label{eq: opt_loc_con}
  u^*(t|x_0,x_f)= -\sqrt{\epsilon}\int_0^t\Phi(t_f,t)^T G_{t_f,\tau}^{-1}\Phi(t_f,\tau) dW(\tau)\\+\Phi(t_f,t)^{T}G_{t_f,0}^{-1} \left(x_f-\left(\int_{0}^{1}e^{A(\theta)t_f}d\theta\right) x_0\right).    
 \end{multline}
This highlights the fact that optimal local controls depend also on the initial condition $X(0,\theta)=x_0$ and the entire history of noise. 
\section{General Case}\label{sec: General Case}
We are now ready to characterize the optimal control for~\eqref{eq:problem 01} subject to~\eqref{eq:uncertain states 001}.

\begin{theorem}\label{thm: Integral Form_non-Markov_Case} 
Suppose $G_{t_f,t}$, for all $0\leq t<t_f$ defined in~\eqref{eq: Gramian} is invertible. Let $u^*$ be the optimal control for~\eqref{eq:problem 01} subject to~\eqref{eq:uncertain states 001} then 
\begin{equation}\label{eq: gen_class_case}
u^*(t|x_0)= \int_{\R^d}u^*(t|x_0,x_f)p^*(t_f,x_f|0,x_0,t)dx_f,
\end{equation}
where $u^*(t|x_0,x_f)$ is characterized in~\eqref{eq: opt_loc_con} and 
\begin{equation}\label{eq: prior_dis_uncon}
p^*(t_f,x_f|0,x_0,t)=\frac{\hat{q}^{\epsilon, G}(0,x_0,t,t_f,x_f)\phi_f(x_f)}{\int_{\R^d}\hat{q}^{\epsilon, G}(0,x_0,t,t_f,y)\phi_f(y)dy}    
\end{equation}
is the optimal prior distribution where
\begin{multline}\label{eq: fin_tran_den_for_pass_dyn}
\hat{q}^{\epsilon, G}(0,x_0,t,t_f,y)=(2\pi\epsilon)^{-\frac{d}{2}}(\mathrm{det}(G_{t_f,t}))^{-\frac{d}{2}}\\ \exp\bigg(-\frac{1}{2\epsilon}\bigg \|y-\left(\int_0^1e^{A(\theta)t_f}d\theta\right) x_0- \sqrt{\epsilon}\int_{0}^{t}\Phi(t_f,\tau) d W(\tau) \bigg\|^2_{G_{t_f,t}^{-1}}\bigg),    
\end{multline}
is the conditional transition density of~\eqref{eq: pass_ave_at_t} and $\phi_f$ together with $\phi_0$ solves~\eqref{eq: Schrödinger system} with end-state transition density~\eqref{eq: tran_den_for_ensen_passive_dyn}. 
\end{theorem}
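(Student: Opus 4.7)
The plan is to mirror the disintegration argument used in the proof of Theorem~2.1, adapting it to the non-Markovian averaged process. As a first step, I would invoke the static Schrödinger characterization (1.2)--(1.4) with end-state transition density $q^{\epsilon,G}$ from (1.8) to recast Problem~1 in the free-endpoint form
\[
\min_u \mathbb{E}\Bigl[\int_0^{t_f}\tfrac{1}{2}\|u(t)\|^2\,dt - \log\phi_f(x(t_f))\Bigr]
\]
over controls steering the averaged process from $x(0)\sim\mu_0$, where $(\phi_0,\phi_f)$ is the Schrödinger pair solving (4.5). This parallels the reduction (2.3)--(2.4) in the Markov case.

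Next, I would let $\mathbf{P}^*$ denote the optimal path law on $\mathcal{X}$ of the averaged process $x(t)=\int_0^1 X(t,\theta)\,d\theta$ and disintegrate it over the joint end-states using the optimal coupling in (1.4):
\[
\mathbf{P}^*(\cdot) = \int_{\R^d\times\R^d}\mathbf{P}^*_{x_0,x_f}(\cdot)\,\phi_0(x_0)\,q^{\epsilon,G}(0,x_0,t_f,x_f)\,\phi_f(x_f)\,dx_0\,dx_f,
\]
where $\mathbf{P}^*_{x_0,x_f}$ is the optimal pinned measure between the Dirac masses $\delta_{x_0}$ and $\delta_{x_f}$. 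For each pinned measure, Theorem~3.1 supplies the optimal control $u^*(t|x_0,x_f)$ in (3.3).

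The key reduction is then the non-Markov analogue of the drift-matching step from the proof of Theorem~2.1. Since both $u^*(t|x_0)$ and $u^*(t|x_0,x_f)$ are adapted to the Brownian filtration $\mathcal{F}_t^W$, comparing the drifts of the mixed and pinned laws through the disintegration yields
\[
u^*(t|x_0) = \mathbb{E}^{\mathbf{P}^*}\!\bigl[u^*(t|x_0,x_f)\,\big|\,x_0,\,\mathcal{F}_t^W\bigr] = \int_{\R^d} u^*(t|x_0,x_f)\,p^*(t_f,x_f|0,x_0,t)\,dx_f,
\]
with $p^*$ the conditional density of $x(t_f)$ given $x_0$ and $\mathcal{F}_t^W$ under $\mathbf{P}^*$. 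In the Markov setting this equality falls out of Fokker--Planck; here the absence of the Markov property forces one to work directly with the Girsanov/martingale structure encoded in (1.10)--(1.11).

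Finally, I would identify $p^*$ with (4.3). Under the reference measure $\mathbf{R}_y$, conditioning the Gaussian representation (1.7) on $\mathcal{F}_t^W$ makes $x(t_f)$ Gaussian with mean $\bigl(\int_0^1 e^{A(\theta)t_f}d\theta\bigr)x_0+\sqrt{\epsilon}\int_0^t\Phi(t_f,\tau)\,dW(\tau)$ and covariance $\epsilon G_{t_f,t}$, yielding precisely the density $\hat{q}^{\epsilon,G}$ in (4.4); the invertibility of $G_{t_f,t}$ is essential here. Since the Radon--Nikodym derivative $d\mathbf{P}^*/d\mathbf{R}_y$ reduces on endpoints to the product $\phi_0(x_0)\phi_f(x_f)$, a Bayes-rule computation reweights the $\mathbf{R}_y$-conditional of $x_f$ by $\phi_f(x_f)$, producing (4.3). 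The main obstacle is this last identification: justifying the path-dependent Bayesian update in the absence of the Markov property, for which the explicit Gaussian structure of (1.7) and the invertibility hypothesis on $G_{t_f,t}$ from Section~3 will be the essential technical inputs.
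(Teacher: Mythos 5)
Your proposal follows essentially the same route as the paper's proof: reduce Problem~1 to the free-endpoint form with terminal cost $-\log\phi_f(x(t_f))$, disintegrate the optimal path law of the averaged process over the Schr\"{o}dinger coupling $\phi_0\, q^{\epsilon,G}\,\phi_f$, match drifts to obtain the mixture formula over the pinned controls of Theorem~\ref{thm: Main}, and identify $p^*$ by conditioning the Gaussian representation~\eqref{eq: pass_ave_at_t} on the noise history up to time $t$ (giving mean $\left(\int_0^1e^{A(\theta)t_f}d\theta\right)x_0+\sqrt{\epsilon}\int_0^t\Phi(t_f,\tau)\,dW(\tau)$ and covariance $\epsilon G_{t_f,t}$) and reweighting by $\phi_f$. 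The one step you leave at the level of an assertion --- that the drift of the mixed law is the conditional expectation of the pinned drifts --- is exactly where the paper does concrete work: it writes the Gaussian marginal densities of the unpinned and pinned averaged processes, differentiates the coupling relation $\rho^*(t,x|x_0)=\int_{\R^d}\rho^*(t,x|x_0,x_f)Q^*(x_f)\,dx_f$ to equate means, concludes the discrepancy has zero expectation, and then rules out a residual zero-mean process $Z(t)$ by showing it would contradict the optimality of the disintegrated law.
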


Before we provide the proof, note that if $A(\theta)=A$ then, from~\cite{ADO-CY:23}, we have that $y(t)$, where $0\le t\le t_f$, in~\eqref{eq: pass_ave_at_t} is a Markov process. This implies that at time $t=t_f$, we have that $y(t_f)$ only depends on $y(t)$. Therefore, we have that $y(t_f)$ given that $y(t)=x$ is characterized as 
\[
y(t_f)=e^{A(t_f-t)}x+\sqrt{\epsilon}\int_{t}^{t_f}\Phi(t_f,\tau) d W(\tau)
\]
and hence the associated conditional transition density~\eqref{eq: fin_tran_den_for_pass_dyn} reduces to the Markov density in~\eqref{eq: tran_den_for_pass_dyn}. 

We highlight the distinction between the above result and Theorem~\ref{thm: Integral Form_Markov_Case}. While the sample paths in Theorem~\ref{thm: Integral Form_Markov_Case} exhibit Markov property, the sample paths in Theorem~\ref{thm: Integral Form_non-Markov_Case}  depend on the entire history of the noise and the initial condition. Therefore,  the optimal control in~\eqref{eq: gen_class_case} is a stochastic feedforward control input.

\begin{proof}
We first show that the functional integral
\begin{equation}\label{eq: gen_relation_between control and pinned control}
u^*(t,x|x_0)= \int_{\R^d}u^*(t|x_0,x_f)p^*(t_f,x_f|0,x_0,t,x)dx_f,
\end{equation}
holds, where $u^*(t|x_0,x_f)$ is characterized in~\eqref{eq: opt_loc_con} and then provide a characterization for $p^*(t_f,x_f|0,x_0,t,x)$.  Let $u^*$ be the optimal control for~\eqref{eq:problem 01} subject to~\eqref{eq:uncertain states 001}. Then, $u^*$ is the optimal control for the alternative formulation~\eqref{eq:equi–problem 03} subject to 
\begin{align}\label{eq: Alt_ensem_con}
&d X(t,\theta)=A(\theta)X(t,\theta)d t+B(\theta)u(t)d t+\sqrt{\epsilon}B(\theta)d W(t),\nonumber\\
&X(0,\theta)=x_0.
\end{align}
Here $\phi_f$ together with $\phi_0$ solves~\eqref{eq: Schrödinger system}, where $q^{\epsilon, G}(0,x_0,t_f,x_f)$ is the end-state transition density of the passive dynamics~\eqref{eq: ens_passive_dyn} and is characterized in~\eqref{eq: tran_den_for_ensen_passive_dyn}. Let 
\begin{equation}\label{eq: output_with_IC}
 x^*(t|x_0)=\left(\int_0^1e^{A(\theta)t}d\theta\right) x_0+\int_0^t \Phi(t,\tau)(u^*(\tau|x_0)d\tau +\sqrt{\epsilon} dW(\tau)),    
\end{equation}

where  $\Phi(t,\tau)$ is defined in~\eqref{eq: non-transition matrix} be the optimal averaged process of~\eqref{eq: Alt_ensem_con} induced by the optimal control $u^*$. Then, the optimal sample path $ x^*(\cdot|x_0)$ induced an optimal probability law $\mathsf{P}^*(\cdot)$ on $\mathcal{X}$ which we disintegrate into
\begin{equation}\label{eq: dis_measure_non-Markov}
\mathsf{P}^*(\cdot)=\int_{\R^d}\mathsf{P}^*_{x_f}(\cdot)Q^*(x_f)dx_f.   
\end{equation}
 Here, $\mathsf{P}_{x_f}^*(\cdot)$ is the optimal probability law induced by the optimal pinned sample path $x^*(\cdot|x_0,x_f)$, characterized as  
\begin{equation}\label{eq: pinned_output_with_IC}
 x^*(t|x_0,x_f)=\left(\int_0^1e^{A(\theta)t}d\theta\right) x_0+\int_0^t \Phi(t,\tau)(u^*(\tau|x_0,x_f)d\tau +\sqrt{\epsilon} dW(\tau)),
\end{equation}
for all $t\in[0,t_f]$, with $u^*(t|x_0,x_f)$ in~\eqref{eq: opt_loc_con}. From~\eqref{eq: opt_joint_dis}, we have that\\ $Q^*(x_f)=\phi_0(x_0)q^{\epsilon, G}(0,x_0,t_f,x_f)\phi_f(x_f)$ is the optimal final state density with $q^{\epsilon, G}(0,x_0,t_f,x_f)$ in~\eqref{eq: tran_den_for_ensen_passive_dyn}.

 Let $\rho^*(\cdot|x_0)$ and  $\rho(\cdot|x_0,x_f)$ be their corresponding optimal controlled density of the processes~\eqref{eq: output_with_IC} and~\eqref{eq: pinned_output_with_IC}, respectively. Since both processes $x^*(t|x_0)$ and $x^*(t|x_0,x_f)$ in~\eqref{eq: output_with_IC} and~\eqref{eq: pinned_output_with_IC}, respectively, are Gaussian processes with means  
\[
\left(\int_0^1e^{A(\theta)t}d\theta\right) x_0+\mathbb{E}\int_0^t \Phi(t,\tau)u^*(\tau|x_0)d\tau
\]
and
\[
\left(\int_0^1e^{A(\theta)t}d\theta\right) x_0+\mathbb{E}\int_0^t \Phi(t,\tau)u^*(\tau|x_0,x_f)d\tau
\]
respectively, and covariance matrix $\epsilon G_{t,0}$, we have that 
\begin{multline}\label{eq: den_unpin_ave_proc}
\rho^*(t,x|x_0)=(2\pi\epsilon)^{-\frac{d}{2}}(\mathrm{det}(G_{t,0}))^{-\frac{d}{2}}\\ \exp\bigg(-\frac{1}{2\epsilon}\bigg\|x-\left(\int_0^1e^{A(\theta)t}d\theta\right) x_0+\mathbb{E}\int_0^t \Phi(t,\tau)u^*(\tau|x_0)d\tau\bigg \|^2_{G_{t,0}^{-1}}\bigg)     
\end{multline}
and
\begin{multline}\label{eq: den_pin_ave_proc}
\rho^*(t,x|x_0,x_f)=(2\pi\epsilon)^{-\frac{d}{2}}(\mathrm{det}(G_{t,0}))^{-\frac{d}{2}}\\ \exp\bigg(-\frac{1}{2\epsilon}\bigg\|x-\left(\int_0^1e^{A(\theta)t}d\theta\right) x_0+\mathbb{E}\int_0^t \Phi(t,\tau)u^*(\tau|x_0,x_f)d\tau\bigg \|^2_{G_{t,0}^{-1}}\bigg).     
\end{multline}
From~\eqref{eq: dis_measure_non-Markov}, since the densities are coupled through
\begin{equation}\label{eq: coupled_den_of_ave_pro}
\rho^*(t,x|x_0)=\int_{\R^d}\rho^*(t,x|x_0,x_f)Q^*(x_f)dx_f,    
\end{equation}
for all $t,x$, we have that
\begin{equation*}
\nabla\rho^*(t,x|x_0)=\int_{\R^d}\nabla\rho^*(t,x|x_0,x_f)Q^*(x_f)dx_f    
\end{equation*}
hold, for all $t,x$. From~\eqref{eq: den_unpin_ave_proc} and~\eqref{eq: den_pin_ave_proc} the above equation reduces to 
\begin{equation}\label{eq: coupled_derivative}
 m(x,t|x_0)\rho^*(t,x|x_0)=\int_{\R^d}m(x,t|x_0,x_f)\rho^*(t,x|x_0,x_f)Q^*(x_f)dx_f,   
\end{equation}
where
\begin{equation*}
m(x,t|x_0):=x-\left(\int_0^1e^{A(\theta)t}d\theta\right) x_0+\mathbb{E}\int_0^t \Phi(t,\tau)u^*(\tau|x_0)d\tau    
\end{equation*}
and
\begin{equation*}
m(x,t|x_0,x_f):=x-\left(\int_0^1e^{A(\theta)t}d\theta\right) x_0\\+\mathbb{E}\int_0^t \Phi(t,\tau)u^*(\tau|x_0,x_f)d\tau.    
\end{equation*}
Using~\eqref{eq: coupled_den_of_ave_pro}, we have that~\eqref{eq: coupled_derivative} reduces to
\begin{equation*}
\mathbb{E}\int_0^t \Phi(t,\tau)u^*(\tau|x_0)d\tau\rho^*(t,x|x_0)=\int_{\R^d}\bigg(\mathbb{E}\int_0^t \Phi(t,\tau)u^*(\tau|x_0,x_f)d\tau\bigg)\rho^*(t,x|x_0,x_f)Q^*(x_f)dx_f
\end{equation*}
and hence
\begin{equation*}
\int_0^t \Phi(t,\tau)\mathbb{E}\bigg(u^*(\tau|x_0)-\int_{\R^d}u^*(\tau|x_0,x_f)\frac{\rho^*(t,x|x_0,x_f)Q^*(x_f)}{\rho^*(t,x|x_0)}dx_f\bigg)d\tau=0
\end{equation*}
holds, for all $t,x$. Therefore, we have that 
\begin{equation}\label{eq: equal_expect}
\mathbb{E}\bigg(u^*(t,x|x_0)-\int_{\R^d}u^*(t,|x_0,x_f)\frac{\rho^*(t,x|x_0,x_f)Q^*(x_f)}{\rho^*(t,x|x_0)}dx_f\bigg)=0.
\end{equation}
Suppose that
\[
u^*(t,x|x_0)\neq\int_{\R^d}u^*(t,|x_0,x_f)p^*(t_f,x_f|0,x_0,t,x)dx_f,
\]
where 
\begin{equation}\label{eq: opt_prior_dis}
p^*(t_f,x_f|0,x_0,t,x)=\frac{\rho^*(t,x|x_0,x_f)Q^*(x_f)}{\rho^*(t,x|x_0)}    
\end{equation}
is the optimal prior distribution. Then, from~\eqref{eq: equal_expect}, we have that 
\begin{equation}\label{eq: process_difference}
u^*(t,x|x_0)=\int_{\R^d}u^*(t,|x_0,x_f)p^*(t_f,x_f|0,x_0,t,x)dx_f+Z(t)
\end{equation}
holds, where the process $Z(t)$ has $\mathbb{E}(Z(t))=0$. By substituting~\eqref{eq: process_difference} in~\eqref{eq: output_with_IC} we get that
\begin{multline}\label{eq: output_with_IC_with_ext}
 x^*(t|x_0)=\left(\int_0^1e^{A(\theta)t}d\theta\right) x_0+\sqrt{\epsilon}\int_0^t \Phi(t,\tau)dW(\tau)\\+\int_{\R^d}\bigg(\int_0^t \Phi(t,\tau)u^*(\tau|x_0,x_f)p^*(t_f,x_f|0,x_0,\tau,x)d\tau\bigg)dx_f +\int_0^t \Phi(t,\tau)Z(\tau)d\tau.  
\end{multline}
By comparing~\eqref{eq: pinned_output_with_IC} to~\eqref{eq: output_with_IC_with_ext}, the extra sample path $\int_0^{\cdot} \Phi(\cdot,\tau)Z(\tau)d\tau$ implies that $\mathsf{P}^*$ assumed in~\eqref{eq: dis_measure_non-Markov} is not the optimal probability law induced by~\eqref{eq: output_with_IC} which is a contradiction. Therefore, we have that 
\[
u^*(t,x|x_0)=\int_{\R^d}u^*(t,|x_0,x_f)p^*(t_f,x_f|0,x_0,t,x)dx_f
\]
holds.

We proceed to characterize the optimal controlled prior distribution $p^*(t_f,x_f|0,x_0)$.  From~\eqref{eq: pass_ave_at_t}, since 
\begin{align}\label{eq: pass_pro_at fin_time}
y(t_f)=&\left(\int_0^1e^{A(\theta)t_f}d\theta\right) x_0+ \sqrt{\epsilon}\int_{0}^{t_f}\Phi(t_f,\tau) d W(\tau),\\
=&\left(\int_0^1e^{A(\theta)t_f}d\theta\right) x_0+\sqrt{\epsilon}\int_{0}^{t}\Phi(t_f,\tau) d W(\tau)+\sqrt{\epsilon}\int_{t}^{t_f}\Phi(t_f,\tau) d W(\tau),\nonumber
\end{align}
given $x_0$ and $W(\tau)$, where $\,0\le \tau \le t$, we have that $y(t_f)$ follows a Gaussian distribution with mean 
\[
\left(\int_0^1e^{A(\theta)t_f}d\theta\right) x_0+ \sqrt{\epsilon}\int_{0}^{t}\Phi(t_f,\tau) d W(\tau)
\]
and covariance $\epsilon G_{t_f,t}$, where $G_{t_f,t}$ is defined in~\eqref{eq: Gramian}.
Hence, the conditional transition density for the passive averaged process $y(t)$ in~\eqref{eq: pass_ave_at_t} is
\begin{multline}\label{eq: gen_cond_trans_den}
q^{\epsilon, G}(0,x_0,t,t_f,y)=(2\pi\epsilon)^{-\frac{d}{2}}(\mathrm{det}(G_{t_f,t}))^{-\frac{d}{2}}\\ \exp\bigg(-\frac{1}{2\epsilon}\bigg \|y-\left(\int_0^1e^{A(\theta)t_f}d\theta\right) x_0- \sqrt{\epsilon}\int_{0}^{t}\Phi(t_f,\tau) d W(\tau) \|^2_{G_{t_f,t}^{-1}}\bigg),    
\end{multline}
This completes the proof.
\end{proof}
We present a systematic approach to deriving the optimal control for~\eqref{eq:problem 01}-\eqref{eq:uncertain states 001}, as characterized in~\eqref{eq: gen_class_case} in Algorithm~$1$ in $5$ steps. The optimal control in Step~$5$ is specifically designed to effectively transition the averaged process corresponding to~\eqref{eq:uncertain states 001}, described in~\eqref{eq: output_with_IC}, from an initial distribution $\mu_0$ with density $\rho_0$ to a target distribution $\mu_f$ with density $\rho_f$ (see Figure~\ref{fig: Monte Carlo Simulation of target density}). 


\begin{algorithm}
  \caption{Algorithm for obtaining optimal control}
  \begin{algorithmic}
    \Statex \textbullet~\textbf{Given:} 
    \State The ensemble of system
    \[
    d X(t,\theta)=(A(\theta)X(t,\theta)+B(\theta)u(t))d t+\sqrt{\epsilon}B(\theta)d W(t)
    \]
    \State Marginal densities
    \[
    d\mu_0=\rho_0 dx_0\quad\text{and}\quad d\mu_f=\rho_f dx_f.
    \]
    \State Cost
    \[
    \min_{u}\mathbb{E}\left[\int_{0}^{t_f}\frac{1}{2}\|u(t)\|^2dt\right]
    \]
    \Statex \textbullet~\textbf{Use $A(\theta)$ and $B(\theta)$ to compute relevant functions:}
    \begin{enumerate}
        \State  \begin{equation*}
        \Phi(t_f,\tau) =\int_0^1e^{A(\theta)(t_f-\tau)}B(\theta)d\theta\quad\text{and}\quad
        G_{t_f,0}=\int_{0}^{t_f}\Phi(t_f,\tau)\Phi(t_f,\tau)^{\mathrm{T}}d\tau
    \end{equation*}
    \State  \begin{equation*}
    q^{\epsilon, G}(0,x_0,t_f,x_f)=(2\pi\epsilon)^{-\frac{d}{2}}(\mathrm{det}(G_{t_f,0}))^{-\frac{d}{2}}\exp\left(-\frac{1}{2\epsilon}\left \|x_f-\left(\int_0^1e^{A(\theta)t_f}d\theta\right)x_0\right \|^2_{G_{t_f,0}^{-1}}\right)
    \end{equation*}
    \State  \begin{multline*}
\hat{q}^{\epsilon, G}(0,x_0,t,t_f,y)=(2\pi\epsilon)^{-\frac{d}{2}}(\mathrm{det}(G_{t_f,t}))^{-\frac{d}{2}}\\ \exp\bigg(-\frac{1}{2\epsilon}\bigg \|y-\left(\int_0^1e^{A(\theta)t_f}d\theta\right) x_0- \sqrt{\epsilon}\int_{0}^{t}\Phi(t_f,\tau) d W(\tau) \|^2_{G_{t_f,t}^{-1}}\bigg)    
\end{multline*}
    \end{enumerate}

    \Statex \textbullet~\textbf{Use~ $1$) and $A(\theta)$ to compute local control:}
    \State \begin{multline*}
  u^*(t|x_0,x_f)= -\sqrt{\epsilon}\int_0^t\Phi(t_f,t)^T G_{t_f,\tau}^{-1}\Phi(t_f,\tau) dW(\tau)\\+\Phi(t_f,t)^{T}G_{t_f,0}^{-1} \left(x_f-\left(\int_{0}^{1}e^{A(\theta)t_f}d\theta\right) x_0\right)    
 \end{multline*}
    \Statex \textbullet~\textbf{Use~ $3$) to compute  posterior transition densities:} 
    \[
p^*(t_f,x_f|0,x_0,t)=\frac{\hat{q}^{\epsilon, G}(0,x_0,t,t_f,x_f)\phi_f(x_f)}{\int_{\R^d}\hat{q}^{\epsilon, G}(0,x_0,t,t_f,y)\phi_f(y)dy}    
    \]
    where $\phi_f$ together with $\phi_0$ is obtained from
    \begin{align*}
\rho_0(x_0)=&\phi_0(x_0)\int_{\R^d}q^{\epsilon, G}(0,x_0,t_f,x_f)\phi_f(x_f)d x_f\cr
\rho_f(x_f)=&\phi_f(x_f)\int_{\R^d}q^{\epsilon, G}(0,x_0,t_f,x_f)\phi_0(x_0)d x_0
\end{align*}
    \Statex \textbullet~\textbf{Compute optimal control:}
    \State\[
    u^*(t|x_0)= \int_{\R^d}u^*(t|x_0,x_f)p^*(t_f,x_f|0,x_0,t)dx_f
    \]
  \end{algorithmic}
\end{algorithm}

It is important to note that, in step~$4$ since the prior transition density $p^*(t_f,x_f|0,x_0,t)$ accounts for the most likely path from $x_0$ to $x_f$, the integration over all possible final states $x_f$ in step $5$ effectively sums over all potential paths that the passive process in~\eqref{eq: pass_ave_at_t} can take from $x_0$ to the final states, exemplifying the principles of path integral~\cite{HJK:05,HJK:2005,HJK:07,HJK-WW-DB:07,DB-WW-HJK:08,WW-DB-HJK:08}. As noted by the transition density $\hat{q}^{\epsilon, G}$ in Step~$2$, the potential paths that the passive process in~\eqref{eq: pass_ave_at_t} can take depend not only on the initial condition $x_0$  but also on the past noise $\{W(\tau)\}_{0\leq \tau\leq t}$, thereby incorporating the entire historical trajectory at time $t$. This underlines the deviation from~\cite{HJK:05,HJK:2005,HJK:07,HJK-WW-DB:07,DB-WW-HJK:08,WW-DB-HJK:08} and \emph{all} the Schr\"{o}dinger bridge frameworks (see for instance~\cite{CY-GT:15,CY-GTT-PM:15,CY-GTT-PM:16,PPD-PM:90, DPP:91} and reference therein).



\section{Numerical Result}
\begin{example}
We consider one-dimensional passive dynamics
\begin{equation}\label{eq: parameter02}
 d X(t,\theta)=-\theta X(t,\theta)d t+\sqrt{\epsilon}d W(t)   
\end{equation}
over the time horizon $[0,1]$, where $\theta\in[0,1]$. At time $t=0$, the observed initial density of~\eqref{eq: parameter02} is
\begin{equation}\label{eq: onedim_initial density}
\rho_0(x) := \begin{cases} 0.2- 0.2 \cos\left(3\pi x\right)+0.2 & \text{for } 0 \leq x < \frac{2}{3} \\ 5-5 \cos\left(6\pi x-4\pi\right)+0.2 & \text{for } \frac{2}{3} \leq x \leq 1 \end{cases}   
\end{equation}
and at time $t=1$, the observed (or desired) density is 
\begin{equation}\label{eq: onedim_final density}
\rho_f(x):=\rho_0(1-x)    
\end{equation} 
illustrated in Figure~\ref{fig_densities}. This captures a scenario where the transition from~\eqref{eq: onedim_initial density} to~\eqref{eq: onedim_final density} is characterized by the harmonic oscillator, with unknown frequency $\theta$ but only the range  $[0,1]$ is known and the oscillator is influenced by thermal or other external noise with intensity $\epsilon>0$. 
 By solving ~\eqref{eq:problem 01} subject to
\begin{equation}\label{eq: parameter2}
 d X(t,\theta)=(-\theta X(t,\theta)+u(t))d t+\sqrt{\epsilon}d W(t)   
\end{equation} 
the goal is to design control input $u(t)$ that will effectively bridge~\eqref{eq: onedim_final density} from~\eqref{eq: onedim_initial density}, despite the stochastic nature of the process. More precisely,  
the Schr\"{o}dinger bridge problem~\eqref{eq:problem 01} subject to~\eqref{eq: parameter2}, seeks to identify the most likely and robust stochastic harmonic oscillator whose density  $\rho$ have started from~\eqref{eq: onedim_initial density} and has ended at~\eqref{eq: onedim_final density}.

Using the parameters $A(\theta)=-\theta$ and $ B(\theta)=1$ from~\eqref{eq: parameter2}, we have that $G_{t_f,\cdot}$ in~\eqref{eq: Gramian} is computed to be:
\begin{align*}
G_{1,\tau}=\left(\int_{\tau}^1\Phi(1,t)^2dt\right),
\end{align*}
where
\begin{equation}\label{eq: com_non_trans2}
\Phi(1,t)=\frac{1-e^{-(1-t)}}{1-t}.  
\end{equation}
One can check that $\tau\mapsto\int_{\tau}^1\left(\frac{1-e^{-(1-t)}}{1-t}\right)^2 dt$ is strictly monotone on $[0,1]$ and hence 
\begin{equation}\label{eq: com_inv_Gram2}
G^{-1}_{1,\tau}=\left(\int_{\tau}^1\left(\frac{1-e^{-(1-t)}}{1-t}\right)^2 dt\right)^{-1}
\end{equation}
is well-defined for all $\tau\in[0,1)$.  Therefore, from Theorem~\eqref{thm: Integral Form_non-Markov_Case}, we have that the optimal control for problem~\eqref{eq:problem 01}-\eqref{eq:uncertain states 001} where~\eqref{eq: parameter2} is~\eqref{eq: gen_class_case}, demonstrated in Figure~\ref{fig0001} where  $u^*(t|x_0,x_f)$ in~\eqref{eq: opt_loc_con} is demonstrated in Figure~\ref{fig_onedimpin_con} and the conditioned transition density in~\eqref{eq: fin_tran_den_for_pass_dyn} are~\eqref{eq: com_non_trans2}-\eqref{eq: com_inv_Gram2} and
 \begin{equation}\label{eq: ondimint_exp_at_ft}
\int_{0}^{1}e^{-\theta}d\theta=1-e.
\end{equation}    
The corresponding pinned bridge is demonstrated in Figure~\ref{fig_onedimpin_sta} and the most likely and robust stochastic harmonic oscillator is shown in Figure~\ref{fig: optimal averaged process}. From Figure~\ref{fig: Monte Carlo Simulation of target density}, we see that the control in Figure~\ref{fig0001} effectively transitions from $\rho_0$ in~\eqref{eq: onedim_initial density} to $\rho_f$ in~\eqref{eq: onedim_final density} through the most robust likely path in~\eqref{eq: output_with_IC} demonstrated in Figure~\ref{fig: optimal averaged process}.

\end{example}
\begin{example}\label{ex: Example1 }
For the two-dimensional case, we have
\begin{equation}\label{eq: parameter}
A(\theta)=\begin{bmatrix}
    0&-\theta\\
    \theta&0
\end{bmatrix}\quad B(\theta)=\begin{bmatrix}
    1&0\\
    0&1
\end{bmatrix}    
\end{equation}
and hence
\begin{equation*}
e^{A(\theta)(1-t)}=\begin{bmatrix}
\cos(\theta(1-t))&-\sin(\theta(1-t))\\
\sin(\theta(1-t))&\cos(\theta(1-t))
\end{bmatrix}.    
\end{equation*}
Thus, $\Phi(1,\cdot)$ in~\eqref{eq: non-transition matrix} is computed to be:
\begin{equation}\label{eq: com_non_trans}
\Phi(1,t)=\begin{bmatrix}
\frac{\sin(1-t)}{(1-t)}&\frac{\cos(1-t)-1}{(1-t)}\\
\frac{1-\cos(1-t)}{(1-t)}&\frac{\sin(1-t)}{(1-t)}
\end{bmatrix}    
\end{equation}
and $G_{t_f,\cdot}$ in~\eqref{eq: Gramian} is computed to be:
\begin{align*}
G_{1,\tau}=\left(\int_{\tau}^1\Phi(1,t)\Phi(1,t)^{\mathrm{T}}dt\right),
\end{align*}
where
\[
\Phi(1,t)\Phi(1,t)^{\mathrm{T}}=\begin{bmatrix}
\frac{2-2\cos(1-t)}{(1-t)^2}&0\\
0&\frac{2-2\cos(1-\tau)}{(1-t)^2}
\end{bmatrix}. 
\]
Since $\lim_{t\rightarrow 1}\frac{2-2\cos(1-t)}{(1-t)^2}=1$ and the function  $t\mapsto\frac{2-2\cos(1-t)}{(1-t)^2}$ is continuous and strictly monotone on $[\tau,1]$, for all $\tau\in[0,1]$, we have that $\tau\mapsto \int_{\tau}^1\frac{2-2\cos(1-t)}{(1-t)^2}dt$ is invertible on $[0,1)$ and hence
\begin{align}\label{eq: com_inv_Gram}
G^{-1}_{1,\tau}=&\left(\int_{\tau}^{1}\Phi(1,t)\Phi(1,t)^{\mathrm{T}}dt\right)^{-1}\nonumber\\=&\left(\int_{\tau}^{1}\frac{2-2\cos(1-t)}{(1-t)^2}dt\right)^{-1}\begin{bmatrix}
1&0\\
0&1
\end{bmatrix}    
\end{align}
is well-defined for all $\tau\in[0,1)$. Therefore, from Theorem~\eqref{thm: Integral Form_non-Markov_Case}, we have that the optimal control for problem~\eqref{eq:problem 01}-\eqref{eq:uncertain states 001} where~\eqref{eq: parameter} is~\eqref{eq: gen_class_case} where $u^*(t|x_0,x_f)$ in~\eqref{eq: opt_loc_con} is demonstrated in Figure~\ref{fig01} having the conditioned transition density in~\eqref{eq: fin_tran_den_for_pass_dyn} are~\eqref{eq: com_non_trans}-\eqref{eq: com_inv_Gram} and
 \begin{equation}\label{eq: int_exp_at_ft}
\int_{0}^{1}e^{A(\theta)}d\theta=\begin{bmatrix}
\sin(1)&\cos(1)-1\\
1-\cos(1)&\sin(1)
\end{bmatrix}.     
\end{equation} 
The most likely pinned bridge is shown in Figure~\ref{fig02}.
\end{example}

\begin{figure}[!t]
\centerline{\includegraphics[width=\columnwidth]{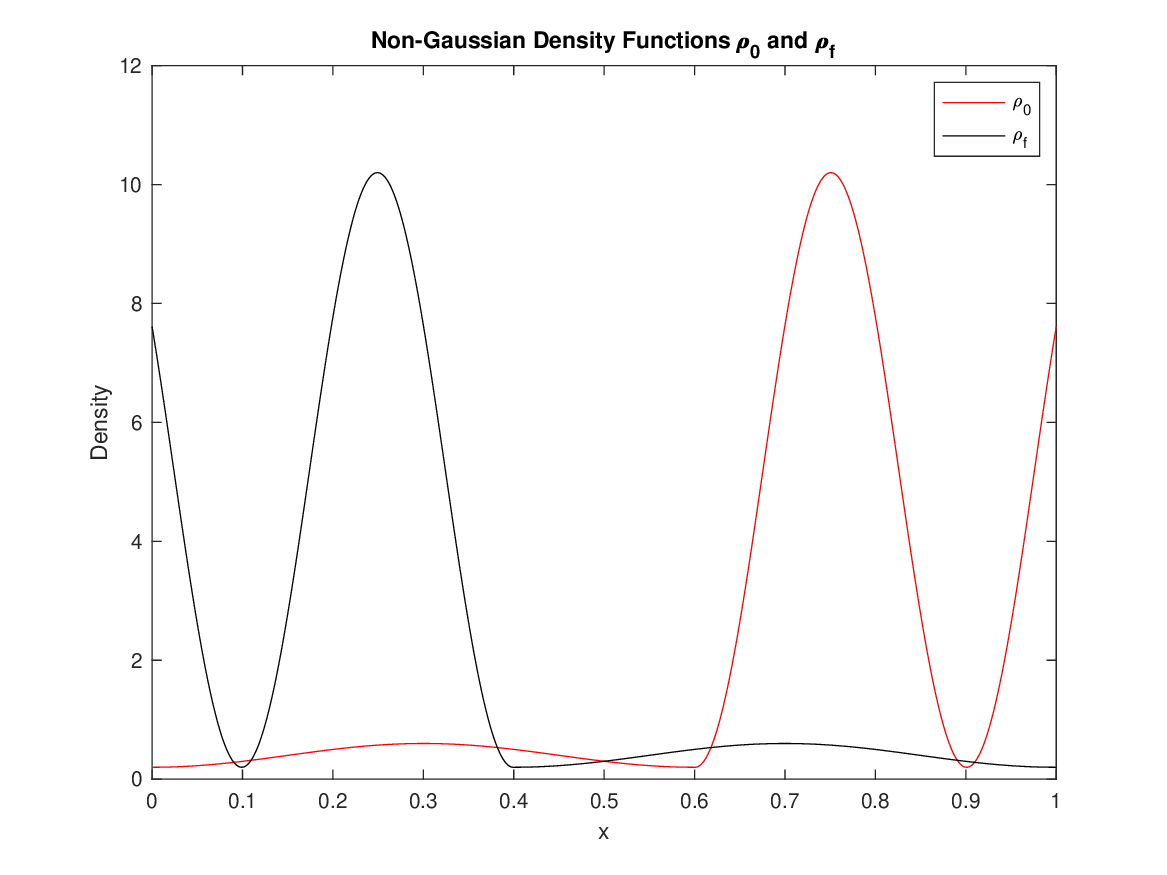}}
\caption{This is the plot of initial and final marginal non-Gaussian densities $\rho_0$ and $\rho_f$ defined in~\eqref{eq: onedim_initial density}-~\eqref{eq: onedim_final density}. This is useful for computing $\phi_0$ and $\phi_f$ in~\eqref{eq: Schrödinger system}.
 }
\label{fig_densities}
\end{figure}

\begin{figure}[!t]
\centerline{\includegraphics[width=\columnwidth]{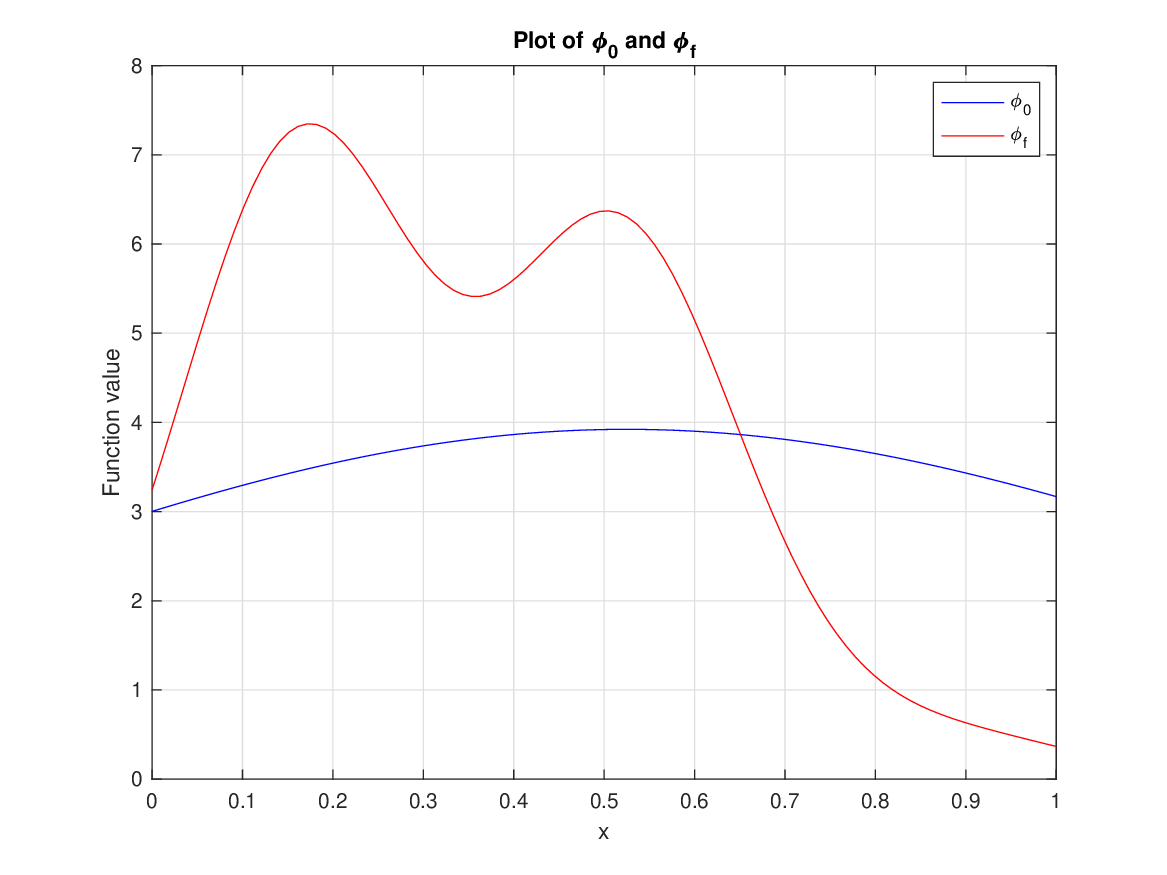}}
\caption{This is the plot of $\phi_0$ and $\phi_f$ computed from~\eqref{eq: Schrödinger system}, where $\rho_0$ and $\rho_f$ are given in~\eqref{eq: onedim_initial density}-\eqref{eq: onedim_final density} with Figure~\ref{fig_densities} and the end-state density function in~\eqref{eq: tran_den_for_ensen_passive_dyn} use the parameters computed in~\eqref{eq: parameter2}-\eqref{eq: com_inv_Gram2}. The function $\phi_f$ (in red) is useful in computing the optimal control in~\eqref{eq: gen_class_case}.
 }
\label{fig_phi_0phif}
\end{figure}

\begin{figure}[!t]
\centerline{\includegraphics[width=\columnwidth]{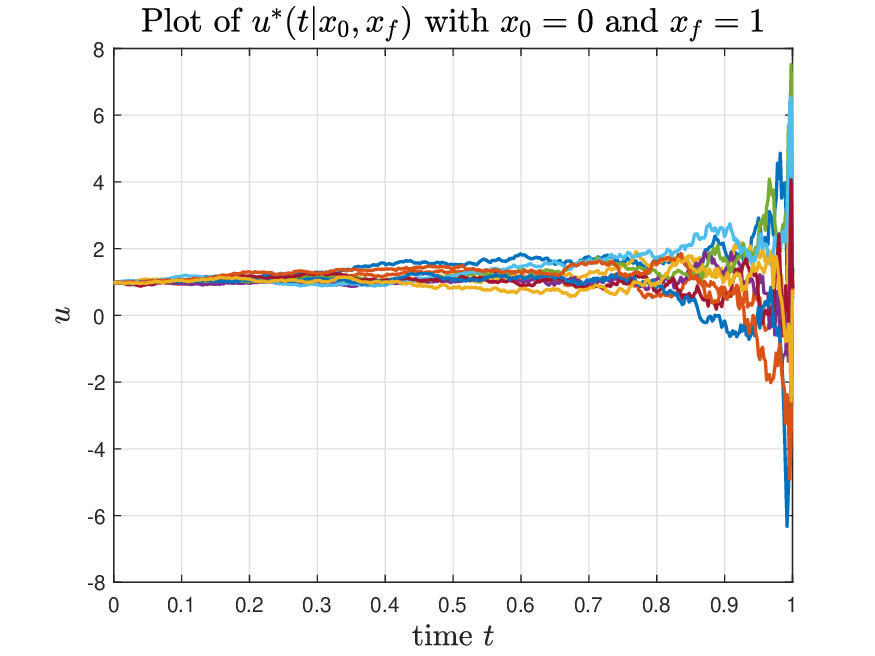}}
\caption{This is the plot of 10 sample paths characterizing the one-dimensional optimal local control process $u^*(t|0,1)$ in~\eqref{eq: opt_loc_con} with parameters in~\eqref{eq: parameter2}-\eqref{eq: ondimint_exp_at_ft}.
 }
\label{fig_onedimpin_con}
\end{figure}

\begin{figure}[!t]
\centerline{\includegraphics[width=\columnwidth]{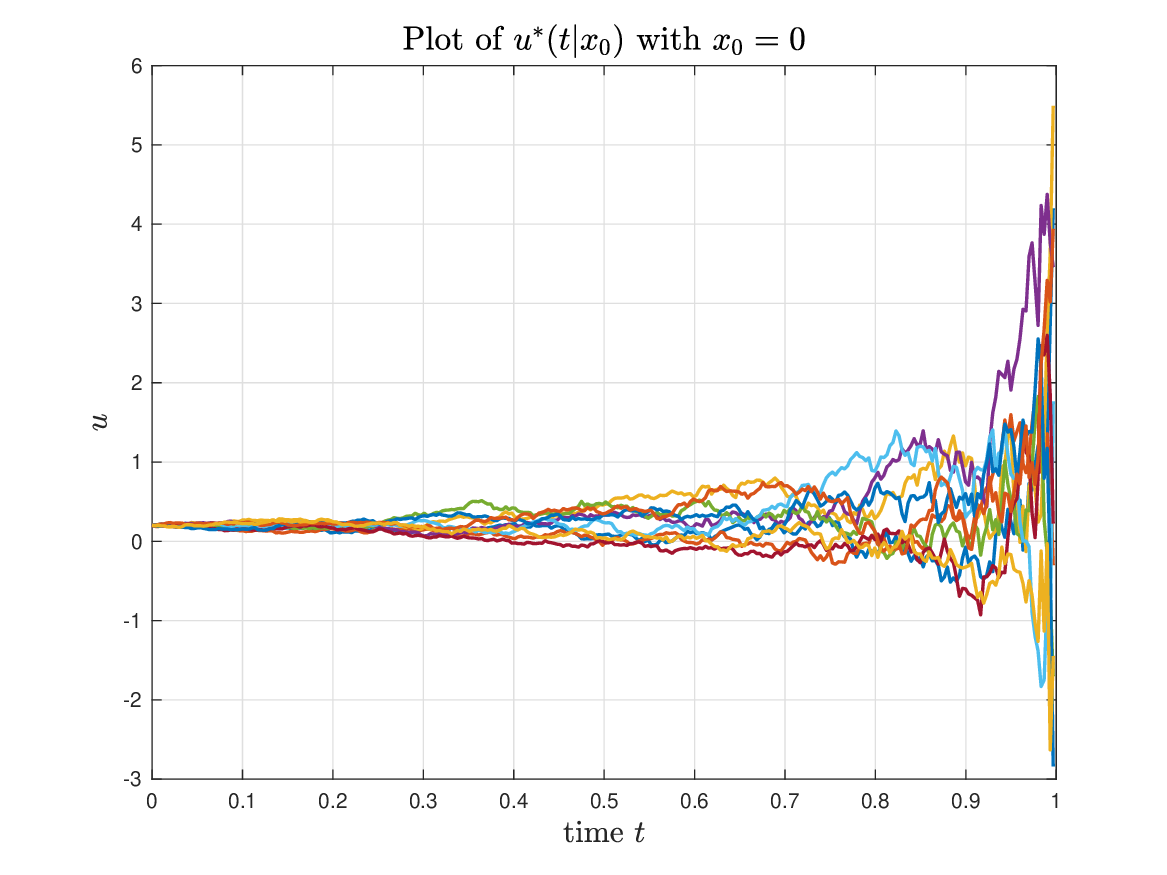}}
\caption{This is the plot of 10 sample paths characterizing the one-dimensional optimal control process $u^*(t|0)$ in~\eqref{eq: gen_class_case} using the pinned control process in Figure~\ref{fig_onedimpin_con} and the prior density in~\eqref{eq: prior_dis_uncon} with parameters in~\eqref{eq: parameter2}-\eqref{eq: ondimint_exp_at_ft},  $\phi_f$ in Figure~\ref{fig_phi_0phif} and $\rho_0$ and $\rho_f$ in Figure~\ref{fig_densities}.
 }
\label{fig0001}
\end{figure} 

\begin{figure}[!t]
\centerline{\includegraphics[width=\columnwidth]{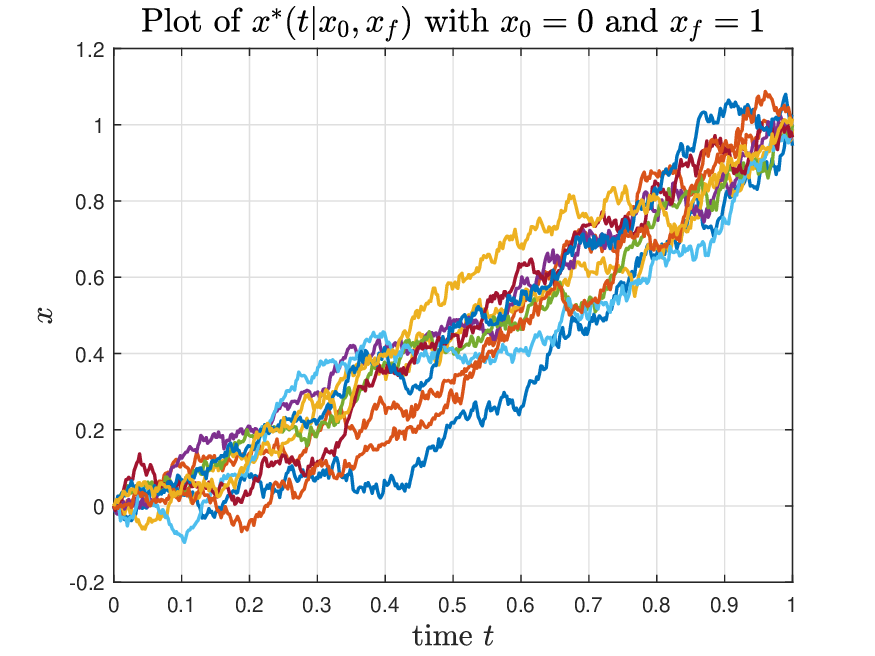}}
\caption{This is the plot of 10 sample paths characterizing the one-dimensional optimal averaged pinned state process $ x^*(t|0,1)$ in~\eqref{eq: pinned_output_with_IC} induced by the optimal local control process in Figure~\ref{fig_onedimpin_con}.} 
\label{fig_onedimpin_sta}
\end{figure} 

\begin{figure}[!t]
\centerline{\includegraphics[width=\columnwidth]{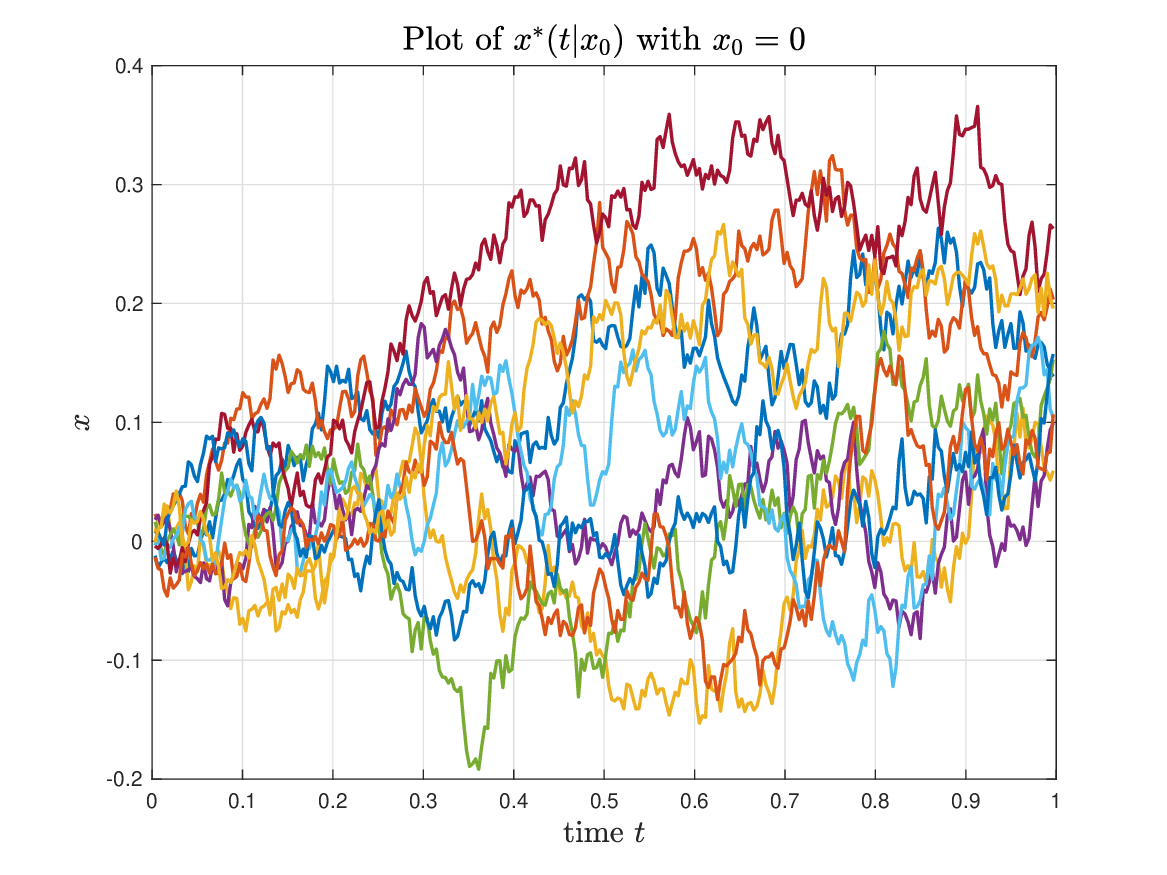}}
\caption{This is the plot of 10 sample paths characterizing the one dimensional optimal averaged state process $ x^*(t|0)$ in~\ref{eq: output_with_IC} induced by the optimal pinned control in Figure~\ref{fig0001}.} 
\label{fig: optimal averaged process}
\end{figure} 

\begin{figure}[!t]
\centerline{\includegraphics[width=\columnwidth]{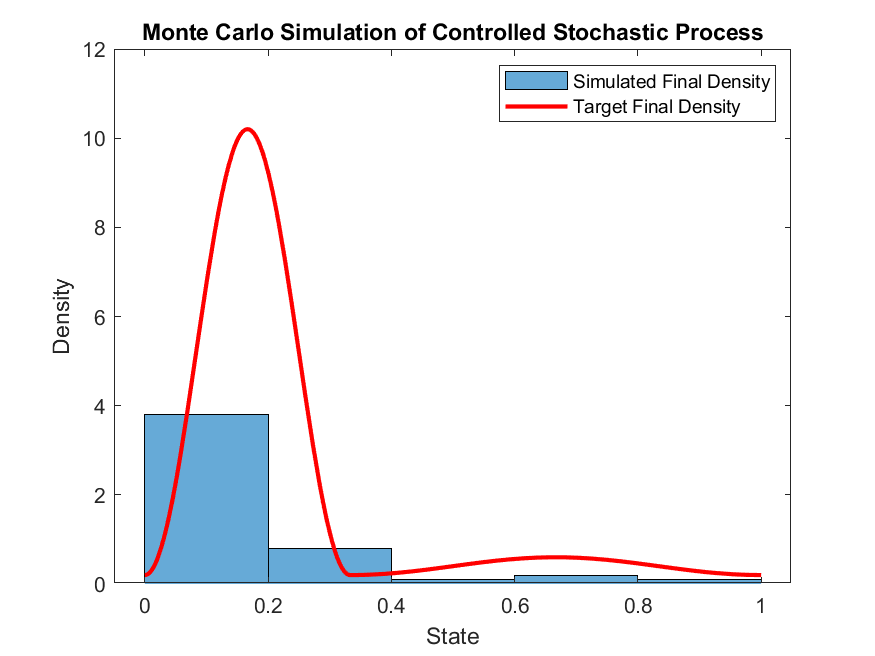}}
\caption{This is the Monte Carlo simulation that estimates  $\rho_f$ in Figure~\ref{fig_densities} using 50 states of $x_0$ randomly initialized from the support of $\rho_0$ in~\ref{fig_densities}. For such $x_0$, the control $u^*(\cdot |x_0)$ in Figure~\ref{fig0001} effectively drive the averaged process $x^*(\cdot|x_0)$ in Figure~\ref{fig: optimal averaged process} to $x^*(1|x_0)$.  Each bar's height in the histogram (in blue) corresponds to the number of samples (or frequency) that fall within the bar's range, normalized by the total number of samples and the width of the bins. Therefore, as the sample points initialized in $\rho_0$ increases, the histogram becomes a better approximation of $\rho_f$.} 
\label{fig: Monte Carlo Simulation of target density}
\end{figure} 

\begin{figure}[!t]
\centerline{\includegraphics[width=\columnwidth]{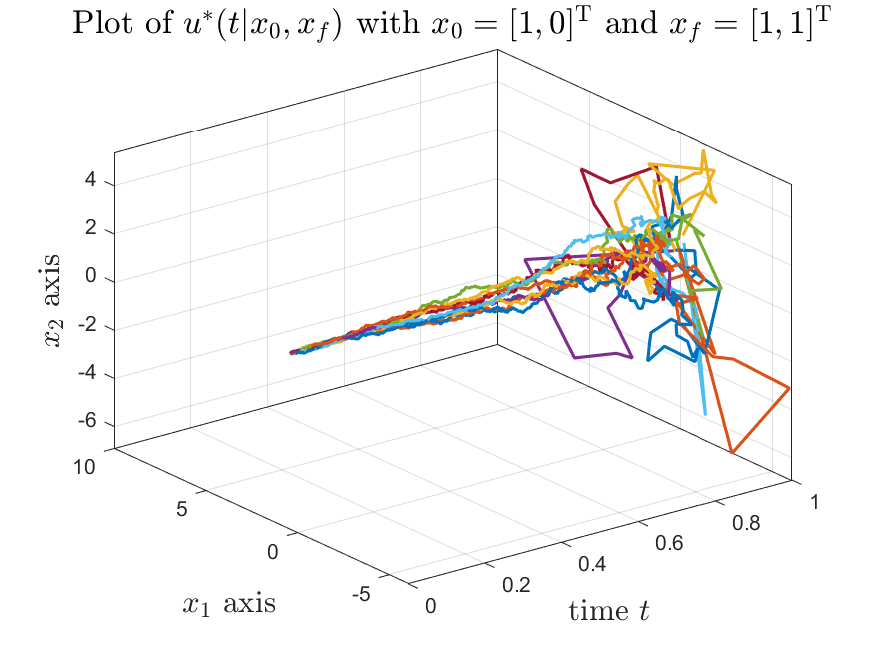}}
\caption{This is the plot 10 sample paths characterizing the two-dimensional optimal local control process $u^*(t|x_0,x_f)$ in~\eqref{eq: opt_loc_con} with parameters in~\eqref{eq: com_non_trans}-\eqref{eq: com_inv_Gram} and~\eqref{eq: int_exp_at_ft}. 
 }
\label{fig01}
\end{figure} 

\begin{figure}[!t]
\centerline{\includegraphics[width=\columnwidth]{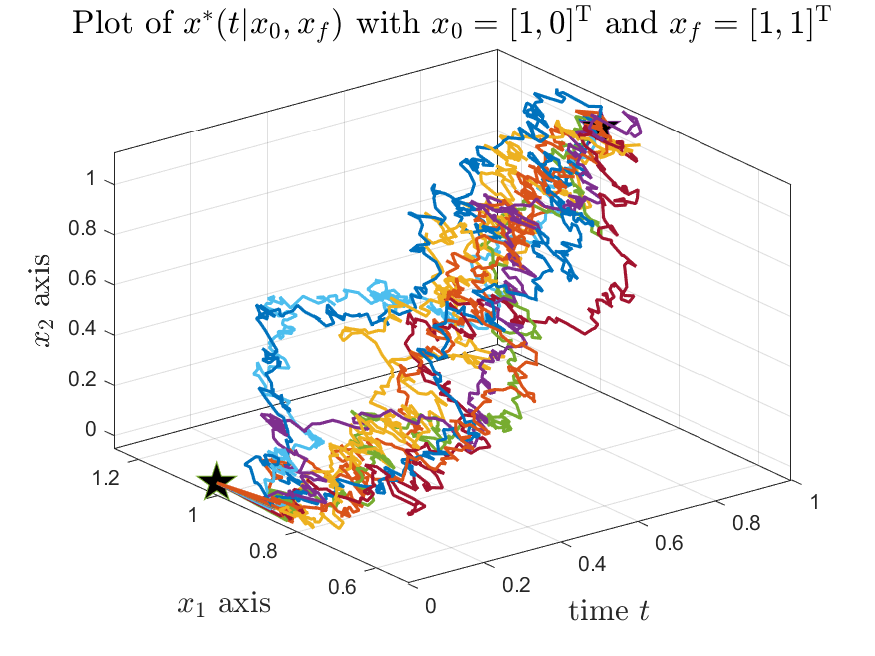}}
\caption{This is the plot of 10 sample paths characterizing the optimal averaged pinned state process $ x^*(t|x_0,x_f)$ in~\eqref{eq: pinned_output_with_IC} induced by the optimal local control in Figure~\eqref{fig01}. The lower black star indicated the initial point $[1,0]^{\mathrm{T}}$ and the upper black star indicates the point $[1,1]^{\mathrm{T}}$.  The optimal local control process in Figure~\eqref{fig01} ensured that the averaged process pinned process $ x^*(t|x_0,x_f)$ initialized at $x_0=[1,0]^{\mathrm{T}}$ (the lower black star) ends at $x_f=[1,1]^{\mathrm{T}}$ (the upper black star) almost surely at time $t=1$.} 
\label{fig02}
\end{figure}

\section{Conclusion and future work}\label{sec:Conclusion and future work}
In this paper, we have discussed the problem of conditioning a  Markov process, subjected to parameter perturbation, to initial and final given distributions. We have shown that the control that establishes the bridge is a stochastic feedforward control. This is our preliminary work and there will be more follow-up projects including discrete and computational problems. For example, although we have formally solved the optimal control problem using a path integral formulation, the task of computing the path integral remains. To address this, we can employ various standard methods, such as Monte Carlo sampling~\cite{HJK:05}. Specifically, one straightforward approach is naive forward sampling. This is our future work. Another future work is to relax the restriction that the control and the noise in our problem enter the system through the same channel. This setup deviates from being a Schrödinger bridge problem. We also plan to study the case other variations of the ensemble of stochastic process such as existence of barrier, non-linearity and extend other data-driven problems associated to the classical Schrodinger bridge problem to our case etc.  

\section*{References}

\def\refname{\vadjust{\vspace*{-2.5em}}}


\end{document}